\def\ps@pprintTitle{%
 \let\@oddhead\@empty
 \let\@evenhead\@empty
 \def\@oddfoot{}%
 \let\@evenfoot\@oddfoot}
\journal{Advances in Applied Mathematics}
\newtheorem{theorem}{Theorem}[section]
\newtheorem{lemma}[theorem]{Lemma} \newtheorem{observation}[theorem]{Observation}
\newtheorem{corollary}[theorem]{Corollary}
\newtheorem{proposition}[theorem]{Proposition}
\newcommand{\N}{{\mathcal N}}
\newcommand{\M}{{\mathcal M}}
\newcommand{\cH}{{\mathcal H}}
\newcommand{\G}{{\mathcal G}}
\newcommand{\U}{{\mathcal U}}
\newcommand{\V}{{V^{\circ}}}
\title{Phylogenetic networks that are their own fold-ups
}
\author[uea]{K.T.~Huber\corref{cor}}
\ead{k.huber@uea.ac.uk}
\author[lirmm]{G.E.~Scholz}
\ead{g.scholz@uea.ac.uk}
 \address[uea]{School of Computing Sciences, University of East Anglia,
      Norwich, UK.}
 \address[lirmm]{LIRMM, Universit\'e de Montpellier, CNRS, Montpellier, France.}
\date{\today}
\begin{document}

\begin{abstract}
Phylogenetic networks are becoming of increasing interest to evolutionary biologists due to their ability to capture complex non-treelike evolutionary processes. From a combinatorial point of view, such networks are certain types of rooted directed acyclic graphs whose leaves are labelled by, for example, species. A number of mathematically interesting classes of phylogenetic networks are known. These include the biologically relevant class of stable phylogenetic networks whose members are defined via certain ``fold-up'' and ``un-fold'' operations that link them with concepts arising within the theory of, for example, graph fibrations. Despite this exciting link, the structural complexity of stable phylogenetic networks is still relatively poorly understood. Employing the popular  tree-based, reticulation-visible, and tree-child properties which allow one to gauge this complexity in one way or another, we provide novel characterizations for when a stable phylogenetic network satisfies either one of these three properties.
\end{abstract}

\begin{keyword} phylogenetic network, stable, tree-based, tree-child,
  reticulation-visible
\MSC 05C05 \sep 05C20 \sep 05C85 \sep 05D15 \sep 92D15
\end{keyword}

\maketitle

\section{Introduction}

Phylogenetic networks are becoming of increasing interest to evolutionary
biologists due to their ability to capture
complex non-treelike evolutionary processes. Reflecting to some extent the different evolutionary
contexts within which such processes can arise has led to the
introduction of a number of mathematically interesting classes of
such structures \cite{G14,HRS10,St16}. These include the class of
stable phylogenetic networks which have already proven useful
for better understanding how, for example, polyploidy arose 
(see e.\,g.\,\cite{BOHMB07,HOLM06,LSHPOM09}).  

From  a combinatorial point of view, a phylogenetic network
is essentially a rooted directed acyclic graph
whose set of {\em leaves}, that is, vertices of indegree one and outdegree zero,
is labelled by a pre-given set $X$ of, for example, species. 
Now a phylogenetic network $N$ is called stable if
it can be thought of as the ``fold-up'' of a certain
multi-labelled tree $\U(N)$ into which a phylogenetic network
$N$ can be ``unfolded''\footnote{Note that the notions
  of a ``stable phylogenetic network'' and a
  ``nearly stable phylogenetic network'' 
  \cite{GGLVZ18} are unrelated
  -- see Section~\ref{sec:tree-child}
  for more on this.}.
Sometimes just called {\em MUL-trees}
\cite{CEJM13,HLMS08,HM06,HMSSS12},
such trees may be viewed as the phylogenetic
analogue of the universal cover of a
digraph \cite{HMSW16} and differ from the type of phylogenetic
trees commonly used by
evolutionary biologist by allowing the leaf set to be  a
multi-set rather than just a set. For example, for the phylogenetic
network $N$ depicted in terms of solid lines in Figure~\ref{favexpl}(i),
\begin{figure}[h]
	\begin{center}
		\includegraphics[scale=0.7]{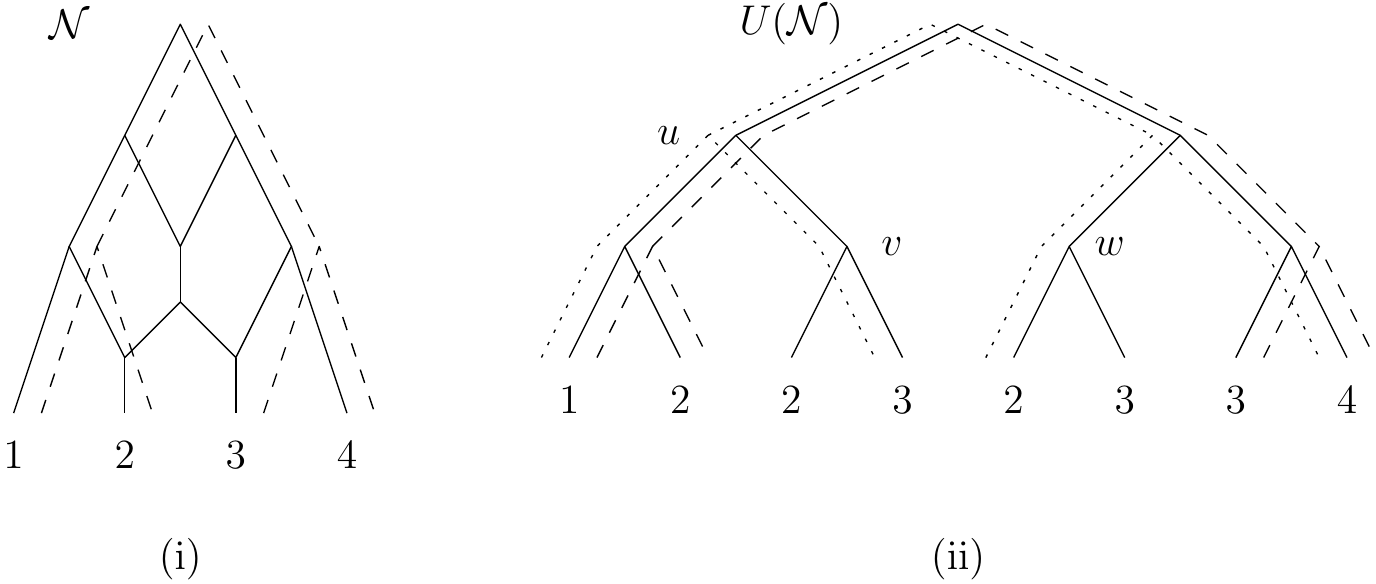}
		\caption{ 
			(i) A phylogenetic network $N$ on $X=\{1,2,3,4\}$ that is stable (solid lines). (ii) Ignoring the
			interior labels for the moment, the MUL-tree 
			$\U(N)$ obtained by unfolding $N$ (solid lines).
			Deferring the precise definitions to 
			the next sections, we indicate in (i)  
			a phylogenetic tree $T$ on $X$ in dashed lines 
			that is displayed by $N$. In addition, we indicate 
			in (ii) one of the ways $T$ is endorsed by $\U(N)$.
			 Note that the phylogenetic tree on $X$ given in dotted lines in (ii) is
			endorsed by $\U(N)$ but not displayed by $N$.
		}
		\label{favexpl}
	\end{center}
\end{figure}
 we picture 
 the tree $\U(N)$ in terms of solid lines in Figure~\ref{favexpl}(ii)
 -- see the next
section for a brief review of both operations and e.\,g.\,\cite{HMSW16} 
for recent results linking the un-fold and fold-up operations to concepts
arising in the theory of graph fibrations and also to the gene tree/species
tree reconciliation problem \cite{BS15, W13}. Despite these encouraging results,
it is however still largely unclear how structurally complex a stable
phylogenetic network can be. 

To help illustrate this question which lies in
the center of this paper, consider again
the phylogenetic network $N$ depicted in Figure~\ref{favexpl}(i). Then
$N$ is relatively
simple in the sense that it is ``tree-based''
\cite{FSS18, FS15, H16, Se16, Z16},
that is, it
can be thought of as a
rooted tree $T$ with leaf set $\{1, 2, 3, 4\}$ (called the 
base tree) to which arcs joining 
distinct arcs in $T$ have been added. As was shown in, for example,
\cite{JI17}
not every stable phylogenetic network need however be tree-based. On 
the other hand, if the complexity of $N$ is based on the notion of
``reticulation-visible'' \cite{BS16,GDZ17, vISS10} (essentially,
this means that for every vertex $h$ in a phylogenetic network $N'$
with indegree two or more there exists a leaf $x_h$ such
that $h$ lies on every directed
path from the root of $N'$ to $x_h$) then $N$
is not simple as the highest up vertex of $N$ with indegree two
does not satisfy that property. Denoting that vertex by $h$
it is clear however that $N$ can easily
be turned into a reticulation-visible network
on $\{1,\ldots, 5\}$ by subdividing the outgoing arc of $h$
by a new vertex $x$ and adding the arc $(x,5)$ and a leaf labelled
``5''. Since the resulting network is still stable,
it follows that there exist stable phylogenetic networks (that
are not also phylogenetic trees!) that are reticulation-visible. To help
establish our results which include characterizations
of stable phylogenetic networks that are tree-based or reticulation-visible,
we employ numerous maps between the vertex sets of
various graphs of interest to us, the most important of
which is a map $\overline{\xi_C^+}$ whose definition
we defer to Section~\ref{sec:injective} as it is quite involved.
For the convenience of the reader,
we summarize them in
terms of a commutative diagram in Figure~\ref{cdl}.

The outline of the paper is as follows. In the next section
we collect relevant basic definitions. In addition, we
review the aforementioned un-fold and fold-up
operations for phylogenetic
networks which underpin the definition of a stable
phylogenetic network. Focusing on such networks, we 
recall in Section~\ref{sec:injective}
the definition of a phylogenetic tree being
displayed by a phylogenetic network. Subsequent to this, we
characterize when such a network displays a phylogenetic tree in terms of when
our map $\overline{\xi_C^+}$ is injective (Theorem~\ref{thfm}). 
Using this insight, we then turn our attention to 
understanding three distinct popular properties of
phylogenetic networks in terms of properties of 
$\overline{\xi_C^+}$. These are the aforementioned tree-based
and reticulation-visible properties and the popular   
``tree-child'' property \cite{BST18,CRV07}. The latter essentially
means that for every non-leaf vertex of a phylogenetic network
at least one of its children has indegree one.
In particular, in Section~\ref{sec:bijective} we 
characterize when a phylogenetic tree
is a base tree of a stable phylogenetic network in terms of that
map (Theorem~\ref{thtb}) and in Section~\ref{sec:tree-child}, 
we characterize when such networks are tree-child or reticulation-visible
in terms of the map $\overline{\xi_C^+}$
(Theorem~\ref{thtc}). -- We refer the reader to \cite[Figure 10.12]{St16}
for a visualization of the interrelationships between network properties
such as the ones of interest to us for a certain type of
phylogenetic network. In
Section~\ref{sec:triplets-trinets}, we turn our attention
to the problem of characterizing stable phylogenetic
networks in terms of combinatorial
structures other than MUL-trees. We conclude with
Section~\ref{sec:discussion}
where we outline future directions of research
which might be of interest to pursue.

\section{Preliminaries}
In this section, we present relevant basic definitions. For this, 
we assume from now on that $X$ is a finite set of size at least three,
unless stated otherwise. We start with some basic concepts from graph theory. 

\subsection{Graphs and DAGs}
Suppose  $G$ is a rooted directed connected graph with vertex set $V(G)$
and arc set $A(G)$ that may or may not contain parallel arcs. 
Then we denote by $\rho_G$ the unique
root of $G$. If $a\in A(G)$ is an arc of $G$ then we 
denote by $tail(a)$ the start vertex of $a$ and by $head(a)$ the 
end vertex of $a$. Furthermore, we write $(x,y)$ for an 
arc $a$ with $x=tail(a)$ and $y=head(a)$. If $G$ does not
contain any directed cycles then we call it a 
{\em rooted directed acyclic pseudo-graph}, or {\em rooted 
	connected pseudoDAG}, for short. A rooted connected pseudoDAG
that does not contain any parallel arcs is called a {\em rooted directed 
	acyclic graph}, or {\em rooted connected DAG}, for short.
We refer to the replacement of an arc $a$ of $G$ by the directed path
$tail(a),w,head(a)$ where $w$ is a new vertex not contained in $V(G)$ 
as a {\em subdivision} of $a$. In this case we also call $w$ a 
{\em subdivision vertex} of $a$. A (pseudo)graph obtained from $G$ via a succession of arc-subdivision operations is called a {\em subdivision of $G$}.

Suppose for the following that
$G$ is a rooted connected pseudoDAG and that $v$ is a vertex of $G$. 
Then we call the number of incoming arcs of
$v$ the {\em indegree} of $v$, denoted by $indeg(v)$,
and its number of outgoing  arcs the {\em outdegree} of $v$, denoted
by $outdeg(v)$. We call a vertex with outdegree zero a
{\em leaf} of $G$ and  a vertex of $G$ that is not a leaf an 
{\em interior vertex} of $G$. The set of all leaves of $G$ is
denoted by $L(G)$ and the set of all interior vertices of $G$
is denoted by $V^{\circ}(G)$. Moreover, we call a vertex $v$ of $G$ a
{\em tree vertex}
if $indeg(v) \leq 1$, and a {\em hybrid vertex} of $G$ if $indeg(v) \geq 2$.
We denote the set of tree vertices of $G$ by $V^-(G)$.

We say that a vertex  $w$ of $G$ is {\em below} a vertex $v$ of $G$ 
 if there exists a directed path
 from $v$ to $w$ (note that a vertex $v$ is below itself).
 If, in addition, $v\not=w$ then we say that $w$ is {\em strictly below}
 $v$. If $w$ is below  $v$ then we call $v$ an
{\em ancestor} of $w$. In case $G$ is a tree, we call for
any set $Y\subseteq L(G)$ of size two or more, the unique last vertex of $G$ that simultaneously lies on every
directed path from $\rho_G$ to an element in $Y$ the
{\em last common ancestor} of the elements of $Y$. We denote this vertex by $lca_G(Y)$ (where we omit the index $G$ if
the graph considered is clear from the context).

\subsection{Phylogenetic networks and MUL-trees}
Inspired by \cite{HMSW16}, we define
a {\em $X$-network $N=(N,\nu)$} to be an
ordered pair consisting of a 
rooted connected pseudoDAG $N$ such that
$\nu:X\to L(N)$ is a bijective map from $X$ into the leaf set of $N$ and
(i) the root $\rho_N$ of $N$ has indegree
zero, (ii)  every non-leaf tree vertex has
outdegree two, (iii) every hybrid vertex has outdegree one, and
(iv) all leaves have degree one. To help keep notation at
bay, we always assume that the leaf set of an
$X$-network $\N=(N,\nu)$ is in fact $X$ (implying that 
$\nu$ is the identity map on $X$). In case there is no
confusion, we therefore denote an $X$-network $\N=(N,\nu)$
simply by $N$.
If $N$ is in fact a DAG and therefore does not contain parallel arcs, then
we call $N$ a {\em phylogenetic network (on $X$)}\footnote{Phylogenetic
	networks that enjoy Property (ii) were called {\em semi-resolved}
	phylogenetic networks in \cite{HMSW16}.}.  Note that in case
        Property (iii) is strengthened by requiring that
        every hybrid vertex has indegree two then
        $N$ is called a {\em binary} phylogenetic network.
        Note that a phylogenetic network on $X$ that does not contain
        a hybrid vertex is generally called a {\em phylogenetic tree (on $X$)}
        (see e.\,g.\,\cite{SS03}). Also note that since every non-leaf
tree vertex of a phylogenetic network (on $X$) has outdegree two
it follows that a phylogenetic tree (on $X$) must be binary.
        A phylogenetic tree $T$ on $X=\{a,b,c\}$
  is called a {\em triplet (on $X$)}. If $lca_{T}(\{a,b\})$ is strictly
  below $lca_{T}(\{a,c\})$ then we denote $T$ by $ab|c$.
 
Motivated by the definition of a phylogenetic tree on $X$, 
we define for $|X|\geq 1$ the
{\em MUL-tree $\M$ on ($X$)} to be an ordered
pair $(M,\mu)$ where $M$ is a rooted tree
for which $indeg(\rho_M)=0$ holds and no vertex
has indegree and outdegree one, and $\mu:X\to \mathcal P(L(M))$
is a {\em labelling map}
from $X$ into the set $\mathcal P(L(M))$ of all non-empty
subsets of the leaf set $L(M)$ of $M$
such that the following properties are satisfied:
(i) For every leaf $l\in L(M)$ there exists some $x\in X$ with 
$l\in \mu(x)$, and (ii) there exists no leaf $l\in L(M)$ and $x,x'\in X$
distinct such that $l\in \mu(x)\cap \mu(x')$ -- see 
Figure~\ref{favexpl}(ii) for an example of such a tree for $X=\{1,2,3,4\}$.
Informally speaking, Properties (i)
and (ii) state that every leaf of $M$ must be labelled by precisely
one element in $X$. Note that if, for all $x\in X$, the size
of $\mu(x)$ is one and $|X|\geq 3$ then we implicitly identify $\mu(x)$
with its unique element, thus rendering $M$ a phylogenetic tree on $X$
in case $M$ is binary.

Suppose for the following that $\M=(M,\mu)$ is a MUL-tree on $X$.
If $\M'=(M',\mu')$ is a further MUL-tree on $X$
then we say that $\M$ and $\M'$
are {\em isomorphic} if there exists a bijection $\eta: V(M)\to V(M')$
that induces a graph isomorphism between $M$ and $M'$ such that
$\mu'(x)=\{\eta(y)\,:\, y\in \mu(x)\}$ holds
for all $x\in X$.
For a vertex  $v\in V(M)$ that is not the
root of $\M$ we denote by
$\M_v=(M_v,\mu_v)$  the MUL-tree with root $v$ obtained
from $\M$ by deleting the incoming
arc of $v$ in $M$ and, to obtain $\mu_v$, restricting $\mu$
to the subset of $X$ that contains the leaf labels of the
set of leaves of $M$ below $v$.  We call a MUL-tree
$\M'=(M',\mu) $ a {\em subMUL-tree} of $\M$
if $\M'$ is either $\M$ itself or 
there exists some $v\in V(M)$ such that $\M'$ and $\M_v$ are
isomorphic.

To help keep notation at bay, we also denote from now on
the vertex set and arc set of 
a MUL-tree $\M$ by $V(\M)$ and $A(\M)$,
respectively.

\section{Stable phylogenetic networks}\label{sec:stable}

In this section, we present a formal definition of a
stable phylogenetic network. Since this definition
relies on a certain ``un-fold'' operation
for phylogenetic networks to obtain a MUL-tree
and a certain ``fold-up'' operation for MUL-trees to obtain an $X$-network,
we briefly review these constructions first. For details on both
of them, we also refer the interested reader 
 to \cite{HM06} (see also \cite{HMSW16}).

\subsection{The un-fold of a phylogenetic network}
To be able to outline this construction, we require further notation.
Suppose $N$ is a phylogenetic network on $X$. 
The set of all directed paths of $N$ starting 
at $\rho_{N}$  and ending in a vertex of $V(N)$ is denoted
by $\pi(N)$. 
For an element $x \in X$, we denote by $\pi_x(N)$ the subset of directed
paths of $\pi(N)$ that end in $x$. Finally, we put 
$\pi_0(N)=\bigcup_{x \in X} \pi_x(N)$.

Viewable as a three-step process, the construction
of the un-fold ${\mathcal U}(N)=(U(N),\phi_{N})$ of $N$ works as follows.
In the first step, we construct
a tree $U^*(N)$
from $N$ whose vertices are the elements in 
$\pi(N)$ and whose arcs are the pairs
$(P, P')$ in  $\pi(N)\times \pi(N)$ for which there exists an
arc $a\in A(N)$
such that $tail(a)$ is the end vertex of $P$, the end vertex
of $P'$ equals $head(a)$,
and $P'$ is the path $P$ extended by the arc $a$.
To obtain the tree $U(N)$, the vertices of  $U^*(N)$ with indegree 
and outdegree one are then suppressed, where by
{\em suppressing} such a vertex $v$, we mean deleting $v$ as well
as the incoming and
outgoing arcs of $v$, and adding an arc from the parent of $v$ to the
child of $v$. Observe
that the suppressed vertices of $U^*(N)$
are precisely the paths in $\pi(N)$ ending at a hybrid vertex
of $N$ \cite{HMSW16}.

The next and final  step of the construction of $\U(N)$
is concerned with defining the map
$\phi_N:X\to \mathcal P(L(U'))$ where we put $U'=U(N)$.
Since, as was observed above,  hybrid vertices are the
only vertices of $N$ with a single outgoing arc,  
there exists a bijection $\Psi_N: \pi^-(N)\to V(U')$ 
from the set $\pi^-(N)$ of directed paths in $\pi(N)$ that end in a 
tree vertex to the vertex set of $U'$. Thus, the 
restriction $\psi_N=\Psi_N|_{\pi_0(N)}$ 
of $\Psi_{N}$ to $\pi_0(N)$ induces a bijection 
from  $\pi_0(N)$ to $L(U')$. 
Putting $\phi_{N}(x)=\{l\in L(U'): \psi_{N}^{-1}(l)\in \pi_x(N)\}$, 
for all $x\in X$, completes the construction
of $\U(N)$.\\

\subsection{The fold-up of a MUL-tree}
\label{sec:subsec-fold-up}
To state this construction, we again require further
concepts which we introduce next. Suppose $\G=(G,\gamma)$ is a pair
consisting of a rooted connected pseudoDAG $G$ and a labelling map
$\gamma:X\to \mathcal P(L(G))$. If $G$ contains a cut-arc $a$
then we denote by $G(a)$
the connected component that contains $head(a)$ in its vertex set
 when deleting $a$. If $G(a)$ is a rooted tree
then we denote the MUL-tree induced by $G(a)$ by $\G(a)$.
By abuse of terminology, we say that $\G(a)$ is a subMUL-tree of $\G$
even if $\G$ itself is not a MUL-tree.

Suppose for the following that $a$ is a cut-arc of $\G$
such that $\G(a)$ is a subMUL-tree of $\G$.
We say that $\G(a)$ is {\em inextendible}
if there exists a cut arc $a'$ in $\G$ distinct from $a$ such that
the subMUL-trees $\G(a)$ and $\G(a')$ are isomorphic.
We denote by $S_{\G(a)}$ the set of all
subMUL-trees of $\G$ that are isomorphic with $\G(a)$ (including
$\G(a)$ itself).
In case $\G(a)$ is inextendible
then we say that $\G(a)$ is {\em maximal inextendible}
if every inextendible subMUL-tree
$\cH$ of $\G$ that contains an element in 
$S_{\G(a)}$ as a subMUL-tree is contained in $S_{\G(a)}$.
 In other words, $\G(a)$ is maximal inextendible
if there is no other inextendible subtree that properly contains a
subMUL-tree that is isomorphic to $\G(a)$.

For  $\M=(M,\mu)$ a MUL-tree on $X$,
we define the \emph{fold-up} $F(\M)$ of $\M$
to be the $X$-network obtained by applying
the following operation until no inextendible subMUL-tree
remains in $\M$. First select a maximal inextendible
subMUL-tree $\M'$ from $\M$. Then subdivide the
incoming arc of the root of each subMUL-tree  in $S_{\M'}$
with a new vertex. Next, identify all subdivision vertices
introduced at this step
and delete all but one subMUL-tree in $S_{\M'}$ from $\M$
and also the incoming arcs of their roots. Finally, replace $\M$ by the
resulting rooted labelled pseudoDAG and repeat the above
process for a maximal inextendible subMUL-tree $\M'$ of $\M$.

Note that upon completion, the resulting pseudoDAG $F(\M)$ is
always an $X$-network and that this network is independent of the order
in which the maximal inextendible subMUL-trees have been processed
\cite{HM06}. However, $F(\M)$ 
 need not be a phylogenetic network on $X$ as it might have
parallel arcs. We call a MUL-tree $\M$ {\em sound}
if $F(\M)$ is a phylogenetic network. Note that
stable binary phylogenetic networks were characterized
in  \cite[Theorem 1]{HMSW16} in terms of a vertex property.
Also note that sound MUL-trees were characterized in \cite{HMSW16}
as those MUL-trees which do not contain a pair of isomorphic
subMUL-trees whose roots share a parent. So, for example, the MUL-tree
depicted in Figure~\ref{figfun}(ii) is sound.

To help illustrate the fold-up operation, consider
the MUL-tree $\M$ pictured in Figure~\ref{figfun}(ii). 
Then the rooted, connected pseudoDAG  on $X=\{1,2,3\}$ depicted 
in Figure~\ref{figfun}(iii) is $F(\M)$. Clearly,
$F(\M)$ is a phylogenetic network on $X$ and 
the MUL-trees $\U(F(\M))$ and $\M$ are isomorphic.
Note however that $\M$ is
also the un-fold of the $X$-network $N$ pictured in Figure~\ref{figfun}(i)
and $F(\M)$ and $N$ are clearly not isomorphic. We call
an $X$-network $N$  that is isomorphic with 
$F(\U(N))$ {\em stable}. Note that for any stable phylogenetic
network $N$ the un-fold $\U(N)$ of $N$ is a sound MUL-tree.

\begin{figure}[h]
\begin{center}
\includegraphics[scale=0.7]{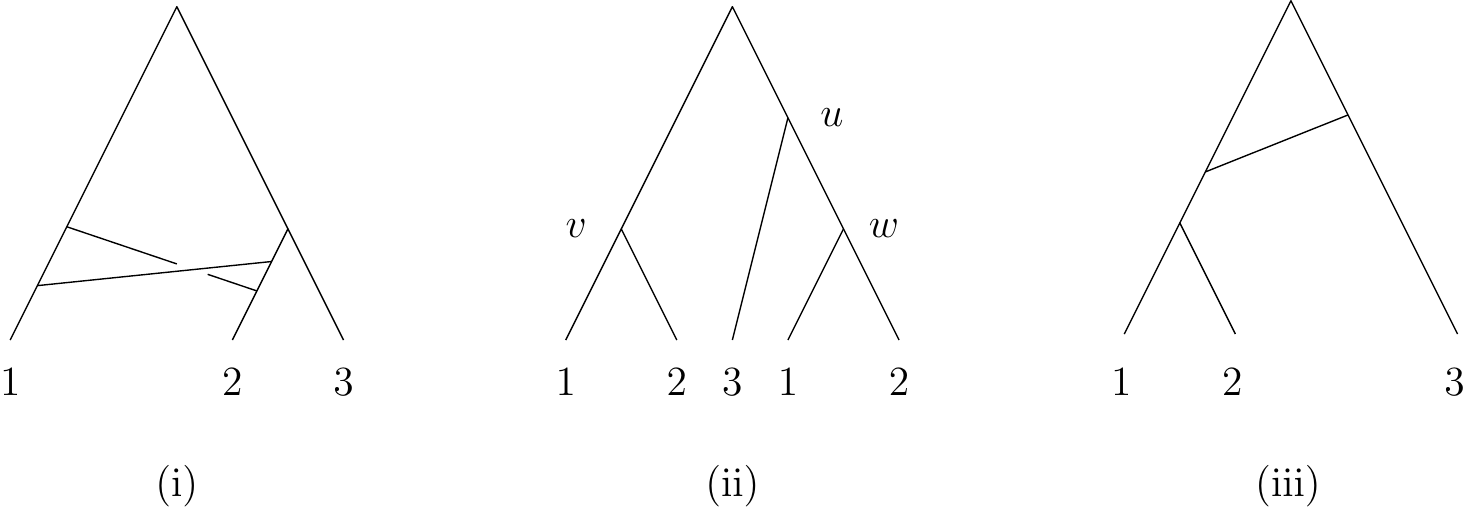}
\caption{(i) A phylogenetic network $N$ on $X=\{1,2,3\}$ that is not stable. (ii) The 
  unfolded version $\U(N)$ of $N$ (ignoring interior labels for the
  moment). (iii) The folded version $F(\U(N))$ 
  of $\U(N)$.}
\label{figfun}
\end{center}
\end{figure}

As we shall see, it is advantageous to describe the
set of tree vertices of a folded-up MUL-tree $\M$
in terms of an equivalence relation  $\sim$ on
the vertex set of $\M$. To state that relation, suppose that 
 $\M=(M,\mu)$ is a MUL-tree. 
Let $\sim$ denote the  equivalence relation
defined, for all $u,v\in V(M)$, by putting $u\sim v$
if $u=v$ or the  subMUL-trees
$\M_v$ and $\M_u$ of  
$\M$ rooted at $u$ and $v$, respectively, are isomorphic.
Note that if $v$ and $w$ are two leaves of $\M$ then
$v\sim w$ if and only if there exists some $x\in X$ such that
$v,w\in \mu(x)$. We denote the quotient of $V(\M)$ via 
$\sim$ by $V(\M)/\sim$, and we denote by $\overline{a}$
the equivalence class
generated by a vertex $a\in V(\M)$. We call the map 
$p_{\M}:V(\M)\to V(\M)/\sim$ defined by mapping
each vertex of $M$ to its equivalence class under $\sim$ the
{\em projection of $\M$ to  $V(\M)/\sim$}. Note that if $u$ and $v$ 
are vertices in $\M$ such that $u$ is an ancestor of
$v$, then $|p_{\M}(u)| \leq |p_{\M}(v)|$.

For example, for $v$ and $w$ the indicated interior vertices in the
MUL-tree depicted in Figure~\ref{favexpl}(ii), we have  $w\sim v$.
Furthermore, $ |V(\M)/\sim|=10 $ and four of those 10 equivalence
classes are induced by the leaves of $\M$. Finally, 
$|p_{\M}(v)|=2$  and $|p_{\M}(u)|=1$.

As a first consequence, we have the
following result whose straight-forward proof we
leave to the interested reader.

\begin{lemma}\label{lmsim}
  Suppose that $\M=(M,\mu)$ is a sound MUL-tree and that $u,v\in V(M)$.
  Then, $u \sim v$ if and only if
  the directed paths $\Psi_{F(\M)}^{-1}(u)$ and 
  $\Psi_{F(\M)}^{-1}(v)$ in $F(\M)$
  have the same end vertex (which
  must necessarily be a tree vertex).
In particular, if there exists a  stable phylogenetic network
$N$ such that 
$\M$  and $\U(N)$ are isomorphic then there 
exists a trivial bijection $\kappa_{N}:V^-(N)\to V(\U(N))/\sim$
from the set $V^-(N)$ of tree vertices of $N$ to the projection of
$V(\U(N))$ to $V(\U(N))/\sim$.
\end{lemma}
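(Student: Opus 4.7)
The plan is to interpret the fold-up operation as realising the set-theoretic quotient $V(\M)\to V(\M)/\sim$ and then translate this into the language of the paths $\Psi_{F(\M)}^{-1}(u)$. Since $\M$ is sound, $F(\M)$ is a phylogenetic network on $X$; let $q:V(\M)\to V(F(\M))$ be the natural surjection carried along by the fold-up procedure (at each elementary round, every vertex in a non-retained copy of the chosen maximal inextendible subMUL-tree is identified with the corresponding vertex of the retained copy, as prescribed by the gluing isomorphism). By the standard reciprocity $\U(F(\M))\cong\M$ for sound $\M$, which is illustrated in the example following Figure~\ref{figfun}, each vertex $u\in V(\M)$ corresponds to the element $\Psi_{F(\M)}^{-1}(q(u))\in\pi^{-}(F(\M))$, that is the unique directed path from $\rho_{F(\M)}$ ending at the tree vertex $q(u)$. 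Hence the condition that $\Psi_{F(\M)}^{-1}(u)$ and $\Psi_{F(\M)}^{-1}(v)$ share their endpoint is equivalent to $q(u)=q(v)$, and the endpoint is automatically a tree vertex because $\Psi_{F(\M)}$ is only defined on paths ending at tree vertices.

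It remains to prove $q(u)=q(v)\iff u\sim v$. The implication $u\sim v\Rightarrow q(u)=q(v)$ is the direction that needs some work, and I would induct on $|V(\M_u)|$. If $\M_u\cong\M_v$ and $\M_u$ is already maximal inextendible, then $\M_v\in S_{\M_u}$ and $u,v$ are identified in the fold-up round that processes $\M_u$. Otherwise, $\M_u$ is contained in a strictly larger maximal inextendible subMUL-tree $\M_{u'}$ of $\M$, and the isomorphism $\M_{u'}\to\M_{v'}$ used by that round sends $u$ to some vertex $v^{\ast}$ with $\M_{v^{\ast}}\cong\M_u\cong\M_v$ and $q(u)=q(v^{\ast})$; applying the inductive hypothesis inside the retained copy to the pair $v^{\ast},v$ (whose common subMUL-tree is strictly smaller than $\M_{u'}$) then yields $q(v^{\ast})=q(v)$, closing the loop. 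For the converse, every elementary identification performed by the fold-up is by definition induced by an isomorphism between inextendible subMUL-trees, so it only identifies vertices with isomorphic rooted subMUL-trees; a routine induction on the number of elementary identifications composing $q$ then gives $\M_u\cong\M_v$ whenever $q(u)=q(v)$.

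The ``in particular'' assertion is now immediate: if $N$ is stable with $\U(N)\cong\M$, then $N\cong F(\U(N))$, and the bijection $\kappa_{N}$ sends a tree vertex $w\in V^{-}(N)$ to the $\sim$-class $\{u\in V(\U(N)): \Psi_{N}^{-1}(u)\text{ ends at }w\}$; that this map is well defined and bijective is forced by the definitions of $\U$ and $F$ together with the first half of the lemma, which is precisely why the authors label it ``trivial''. The main obstacle is the bookkeeping in the forward direction of the biconditional: one must reconcile the non-deterministic choice of maximal inextendible subMUL-tree at each round with the fact that $F(\M)$ is nevertheless well defined up to isomorphism, verify that soundness rules out any pathological identification of sibling tree vertices, and ensure that the induction on $|V(\M_u)|$ genuinely terminates, all of which come down to careful exploitation of the maximality clause in the definition of ``maximal inextendible''.
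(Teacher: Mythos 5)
The paper itself leaves this proof to the reader, so there is no written argument to mirror; judged on its own, your plan (realise the fold-up as a quotient map $q$ whose fibres are the $\sim$-classes, then translate end vertices of paths into values of $q$) is the natural one, but its central step is broken. The induction you propose for $u\sim v\Rightarrow q(u)=q(v)$ does not work as written. First, the measure $|V(\M_u)|$ never decreases: the pair $(v^{\ast},v)$ you recurse on satisfies $\M_{v^{\ast}}\cong\M_u\cong\M_v$, so it has exactly the same measure; ``strictly smaller than $\M_{u'}$'' is not the quantity you are inducting on. Second, the case split rests on the false claim that an inextendible but non-maximal $\M_u$ is contained in a strictly larger maximal inextendible subMUL-tree: only \emph{some} member of $S_{\M_u}$ need be so contained, and the particular copies rooted at $u$ and at $v$ may both lie outside every copy of that larger tree. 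Concretely, take a sound MUL-tree in which a cherry on $\{1,2\}$ occurs once with sibling leaf $4$ and once inside each of two copies (not sharing a parent) of the subMUL-tree ``cherry on $\{1,2\}$ with sibling leaf $3$''; for $u$ the root of the free-standing cherry and $v$ the root of a cherry inside one of the larger copies, neither copy of $\M_u$ sits inside a copy of the unique maximal inextendible class, and $u,v$ are only identified in a later round, after that class has been folded and maximality has been recomputed in the intermediate pseudoDAG. Moreover $v$ need not lie in ``the retained copy'', so the recursive call is not licensed even apart from the measure. A correct argument has to proceed along the rounds of the fold-up (using, e.g., that copies of distinct maximal inextendible classes are vertex-disjoint and that every subMUL-tree processed in a later round is an intact original subtree of $\M$ -- the latter invariant is also silently needed in your ``routine induction'' for the converse, since isomorphy of copies in an intermediate object must be transferred back to isomorphy of $\M_u$ and $\M_v$ in $\M$).

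The bridge from $q$ to the statement is also asserted rather than proved. You write that $u$ corresponds to $\Psi_{F(\M)}^{-1}(q(u))$, ``the unique directed path from $\rho_{F(\M)}$ ending at the tree vertex $q(u)$'': the expression is a type mismatch, since $\Psi_{F(\M)}^{-1}$ is defined on vertices of the un-fold of $F(\M)$, not on vertices of $F(\M)$, and the uniqueness claim is false precisely in the interesting case -- when $q$ identifies $u\neq v$ there are at least two root-paths ending at $q(u)$. What you actually need is that a reciprocity isomorphism $\eta:\M\to\U(F(\M))$ can be chosen so that the end vertex of $\Psi_{F(\M)}^{-1}(\eta(u))$ equals $q(u)$ for every $u\in V(\M)$; this is true, but it is where much of the content of the lemma sits (the statement already presupposes such an identification of $V(M)$ with $V(\U(F(\M)))$), and it must be extracted from the construction of the un-fold of $F(\M)$, or from \cite{HM06}, rather than from the example following Figure~\ref{figfun}. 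Finally, a small point: $q$ surjects onto $V^-(F(\M))$, not onto $V(F(\M))$, since the hybrid vertices of $F(\M)$ are the newly introduced subdivision vertices; with that reading, and once the two gaps above are closed, your derivation of the bijection $\kappa_N$ in the ``in particular'' part is fine.
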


Note that Figure~\ref{figfun} indicates that
the second part of Lemma~\ref{lmsim} does not hold if
we drop the assumption that $N$ is a  {\em stable} phylogenetic network.


To be able to state an observation concerning
stable phylogenetic networks which might be of interest in its own right,
we require a further concept.
Suppose $G=(V,E)$ and $G'=(V',E')$ are two
directed graphs. Then we define the {\em union} of $G$ and $G'$ 
to be the directed graph with vertex set $V\cup V'$ and arc set $E\cup E'$.
We call the union of two arc-disjoint directed paths $P$ and $P'$ of $G$
which have the same start vertex and also the same end vertex a
{\em reticulation cycle} of $G$. 

\begin{observation}\label{isosub}
  Suppose that $N$ is a stable phylogenetic network and
  that $P_1, P_2\in \pi^-(N)$ distinct. Then the union of
$P_1$ and $P_2$ contains a subgraph that is a
reticulation cycle of $N$ if and only if the vertices
$\Psi_{N}(P_1)$ and $\Psi_{N}(P_2)$ of $\U(N)$ are
contained in two distinct but isomorphic subMUL-trees of $\U(N)$. 
\end{observation}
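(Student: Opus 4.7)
The proof will exploit the bijection $\Psi_N : \pi^-(N) \to V(\U(N))$ from the construction of the un-fold, together with the stability identity $N \cong F(\U(N))$ and Lemma~\ref{lmsim}. The rough picture is that a reticulation cycle in $N$ is precisely what forces two distinct paths into a common hybrid vertex $h$, while the un-fold operation ``separates'' these by producing two distinct but isomorphic copies of the subnetwork hanging below $h$.

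For the forward direction, suppose $P_1 \cup P_2$ contains a reticulation cycle with end vertex $h$. Since two arc-disjoint directed paths enter $h$, $h$ is a hybrid vertex of $N$. Any incoming arc of $h$ is used by at most one of $P_1, P_2$ (as each such path is simple), so both $P_1$ and $P_2$ must pass through $h$, and their truncations $P_1^h, P_2^h \in \pi(N)$ at $h$ are distinct. In the intermediate tree $U^*(N)$, the vertices corresponding to $P_1^h$ and $P_2^h$ root two vertex-disjoint subtrees, each of which is, by construction, the un-fold of the subnetwork of $N$ rooted at $h$; in particular the two subtrees are isomorphic. Since paths in $\pi(N)$ ending at hybrid vertices are exactly those suppressed in passing from $U^*(N)$ to $U(N)$, these two subtrees descend to two distinct but isomorphic subMUL-trees of $\U(N)$, and $\Psi_N(P_i)$ lies in the copy rooted under $P_i^h$.

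For the converse, assume $\Psi_N(P_1)$ and $\Psi_N(P_2)$ lie in two distinct isomorphic subMUL-trees of $\U(N)$, and pick maximal such subMUL-trees $\T_1, \T_2$ with roots $r_1, r_2$. Then the parents $r_1', r_2'$ of $r_1, r_2$ in $\U(N)$ satisfy $r_1' \sim r_2'$, so by Lemma~\ref{lmsim} the paths $\Psi_N^{-1}(r_1'), \Psi_N^{-1}(r_2')$ share a common terminal tree vertex $p$ in $N$; the subtrees $\T_1, \T_2$ therefore correspond to two distinct pre-images (under the fold-up) of a single hybrid vertex $h$ lying immediately below $p$. Consequently there exist two distinct paths $\tilde Q_1, \tilde Q_2$ in $N$, both from $\rho_N$ to $h$, such that each $P_i$ is obtained by extending $\tilde Q_i$ along a descending path from $h$. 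Letting $s$ be the last common vertex of $\tilde Q_1$ and $\tilde Q_2$ before $h$, the two arc-disjoint sub-paths from $s$ to $h$ extracted from $\tilde Q_1, \tilde Q_2$ form a reticulation cycle contained in $P_1 \cup P_2$.

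The main obstacle I anticipate is the bookkeeping around the suppression step in the construction of $\U(N)$ from $U^*(N)$: one has to verify that ``the subtree below $P_i^h$'' remains a well-defined identifiable subMUL-tree after all hybrid-ending paths are suppressed, and that the two copies stay vertex-disjoint inside $\U(N)$. In the converse direction, the parallel delicacy is that the fold-up is iterative, so the roots $r_1, r_2$ may only become identified in $N$ after several maximal-inextendible steps; pinpointing the precise level at which this identification occurs is exactly what Lemma~\ref{lmsim} and the equivalence $\sim$ are designed to do.
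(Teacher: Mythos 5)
The paper itself leaves this observation unproved, so there is nothing to compare against; judging your argument on its own terms: your forward direction is essentially correct (modulo a wording slip -- the relevant fact is that each $P_i$, being a simple directed path, uses at most one incoming arc of $h$, not that each incoming arc of $h$ is used by at most one of $P_1,P_2$), but your converse breaks at its first step. You choose a maximal pair of distinct isomorphic subMUL-trees $\T_1,\T_2$ with roots $r_1,r_2$ and assert that their parents $r_1',r_2'$ satisfy $r_1'\sim r_2'$, hence via Lemma~\ref{lmsim} that $\Psi_{N}^{-1}(r_1')$ and $\Psi_{N}^{-1}(r_2')$ end at a common vertex $p$ of $N$, with the hybrid vertex $h$ sitting immediately below $p$. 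This claim is unjustified and is in fact impossible under your own hypotheses: if $r_1'=r_2'$ then $\U(N)$ would contain two isomorphic subMUL-trees whose roots share a parent, contradicting soundness of $\U(N)$ (which holds because $N$ is stable); and if $r_1'\neq r_2'$ with $r_1'\sim r_2'$, then $\M_{r_1'}$ and $\M_{r_2'}$ would form a strictly larger pair of distinct isomorphic subMUL-trees containing $\Psi_{N}(P_1)$ and $\Psi_{N}(P_2)$, contradicting maximality. So in fact $r_1'\not\sim r_2'$, the two parents fold to \emph{distinct} vertices of $N$ (they are the two attachment points of the merged copies), your vertex $p$ need not exist, and everything downstream of it is unsupported.

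The correct move is simpler: apply Lemma~\ref{lmsim} to $r_1\sim r_2$ directly. Since $r_1\neq r_2$ and $\Psi_{N}$ is bijective, $R_i:=\Psi_{N}^{-1}(r_i)$ are distinct paths ending, by Lemma~\ref{lmsim}, at the same tree vertex $c$ of $N$; and since $r_i$ is an ancestor of $\Psi_{N}(P_i)$ in $\U(N)$ and ancestry in $\U(N)$ corresponds to extension of paths, $R_i$ is an initial segment of $P_i$, so $R_1\cup R_2\subseteq P_1\cup P_2$. One then checks that two distinct directed paths from $\rho_N$ to the same vertex contain a reticulation cycle in their union: remove the longest common suffix of $R_1$ and $R_2$ to find a vertex $h$ that both paths enter via distinct arcs (it exists because $\rho_N$ has indegree zero), then walk back from $h$ along $R_1$ to the first vertex $s$ lying on the initial segment of $R_2$ up to $h$; the two $s$--$h$ subpaths are arc-disjoint and their union is the required reticulation cycle. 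Note that this care is also missing from your final sentence: knowing only that $\tilde Q_1\neq\tilde Q_2$ both end at $h$ does not make the subpaths from their ``last common vertex'' arc-disjoint, because the two paths may enter $h$ through the same arc, having reconverged at an earlier hybrid vertex; you must first pass to a vertex entered via distinct arcs.
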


We again leave the straight-forward proof of the
observation to the interested reader.


\section{Displaying and endorsing phylogenetic trees}\label{sec:injective}
 

As suggested by Figure~\ref{favexpl}, any phylogenetic tree  that is
``contained'' in a phylogenetic network $N$ is also ``contained''
(in possibly more than one way) in its associated MUL-tree $\U(N)$.
However the converse need not hold.
Understanding this relationship is the main purpose of this section.

We start with formalizing
the idea of ``containment'' in an $X$-network and in a MUL-tree,
respectively.
Suppose for the following that $N$ is a $X$-network. 
If $N'$ is a further $X$-network then we say that $N$ 
and $N'$ are {\em isomorphic} if there exists a bijective map
$f:V(N)\to V(N')$ that induces 
a graph isomorphism between $N$ and $N'$ that maps every element of $X$
to itself. Following \cite{HMSW16}, we say that $N$
{\em displays} a phylogenetic tree $T$ on $X$ if there exists
a subgraph $N'$ of $N$ with leaf set $X$ that is isomorphic with
a subdivision $T'$ of $T$ and that isomorphism is the identity on $X$.
Informally speaking, this means that $N'$ can be obtained from
$T$ by replacing arcs $a$ of $T$ by directed paths from
$tail(a)$ to $head(a)$. It should however be noted 
that this definition does not imply that
the root of a phylogenetic tree $T$ displayed by a
 phylogenetic network $N$ is also the root of $N$. 
 Furthermore, it is worth noting that any phylogentic network gives rise to a
 phylogenetic tree that is displayed by it.
 
 To illustrate the concept of displaying, consider 
the triplet $\tau=3|12$ and 
the phylogenetic network $N$ depicted in Figure~\ref{figfun}(iii).
Then $\tau$ is displayed by $N$  in two different ways one of which uses the
 root $\rho_N$ of $N$ as the root of $\tau$ and the other uses a
 child of $\rho_N$ as the root of $\tau$.

Suppose that  $\M=(M,\mu)$ is a MUL-tree on $X$ where $|X|\geq 1$.
Then we call a set $C \subseteq L(M)$ an \emph{$X$-set 
  (of $\M$)} if $|C\cap \mu(x)|=1$ holds for all $x \in X$. 
Informally speaking, an $X$-set for $\mathcal M$ contains for all
$x\in X$ exactly one leaf of $M$ 
labelled by $x$. Note that if $|X|\geq 3$ then any
$X$-set $C$ of $\M$ induces a phylogenetic tree 
$M_C$ on $X$ by first constructing a tree $M_C^+$ from $M$ that is
spanned by all leaves in $C$ and then, to obtain $ M_C$,
suppressing all resulting vertices of indegree and outdegree one
-- see again Figure~\ref{tmc} for an illustration
of this construction. More formally, 
$V(M_C^+)$ is the set of all $v\in V(\M)$ that
lie on a directed path from $lca_{M}(C)$ to a leaf  in $C$ and an
arc $a\in A(\M)$ is an arc of $M_C^+$ if there exists a leaf $l\in C$
such that $a$
is crossed by a directed
path from $lca_M(C)$ to $l$.

Note that for any
phylogenetic tree $T$ displayed by a phylogenetic network $N$,
there exists an $X$-set $C$ of $\M=\U(N)$ such that $T$ and
$M_C$ are isomorphic. However,
unless the vertices of $T'$ corresponding to hybrid
vertices of $N$ are suppressed, the subdivision 
$T'$ of $T$ referred to in the definition of displaying is in general
not isomorphic with  $M_C^+$.
Note that in case they are suppressed, this isomorphism can be defined
in such a way that it is the
identity on the leaf sets -- see, for example, Figure~\ref{tmc}.

\begin{figure}[h]
\begin{center}
\includegraphics[scale=0.7]{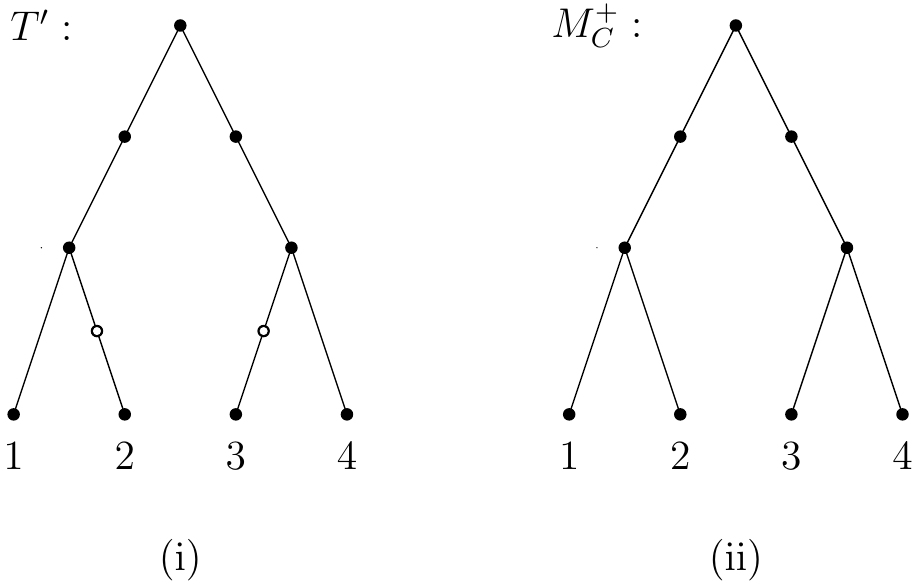}
\caption{(i) For the tree $T$ displayed by $N$ in terms of the dashed
  arcs in Figure~\ref{favexpl}(i), we depict the subdivision $T'$
  of $T$  that is isomorphic to a subgraph of $N$.
  The vertices corresponding to non-leaf tree-vertices (resp. hybrid
  vertices) of $N$ are indicated by
  black (resp. empty) dots. (ii) For $C$ the leaves of $\M$ that are
  incident with a dashed arc in  Figure~\ref{favexpl}(ii), 
  the tree $M_C^+$ endorsed by $\M$. Vertices in $M_C^+$ that
  correspond to  non-leaf vertices of $\M$ are marked with black dots --
  see text for details.
}
\label{tmc}
\end{center}
\end{figure}

We say that a phylogenetic tree $T$ is {\em endorsed
by a MUL-tree $\M=(M,\mu)$ via some $X$-set $C\subseteq L(M)$}
if $T$ and $M_C$ are isomorphic. In case, the $X$-set $C$ is clear
from the context, then we simply say that
$T$ is {\em endorsed by $\M$}. Note that
in \cite{HMSW16} a phylogenetic tree
that is endorsed by a MUL-tree $\M$ was
said to be {\em weakly displayed} by the fold-up of $\M$.

To illustrate this definition, consider
the stable phylogenetic network $N$ on $X=\{1,2,3,4\}$
depicted in bold lines in Figure\,~\ref{favexpl}(i).
Then the dotted phylogenetic tree $T$ in Figure~\ref{favexpl}(ii)
is endorsed by $\U(N)$. However $T$ is not displayed
by $N$ since
the joint grandparent of the leaves $2$ and $3$ in $N$
is used by the path from the root $\rho_{T}$ of $T$
to the leaf labelled 2 and
also by the path from $\rho_{T}$ to the leaf labelled 3.
Intriguingly, the phylogenetic tree in dashed lines in
(ii) is endorsed by $\U(N)$ and is not only displayed by $N$
but also is a base tree for $N$
(see Theorem~\ref{thtb} for more on this).

Note that the  process of constructing the phylogenetic tree
$M_C$ from a phylogenetic tree on $X$
endorsed by a MUL-tree $\M=(M,\mu)$
via some $X$-set $C$ trivially induces an
ancestor-relationship-preserving injective map
$$
\xi_C: V(M_C) \to V(\M)
$$
such that $\xi_C(x)$ is the unique element in
$C\cap \mu(x)$, for all $x \in X$.
Let
$
\xi^+_C: V(M_C^+) \to V(\M)
$
denote the canonical extension 
of $\xi_C$ to $V(M_C^+) $.

The following result is implied by \cite[Theorem 6]{HMSW16}
which characterizes phylogenetic trees that are weakly displayed by
a phylogenetic network. As such, it presents a first
link between displaying a phylogenetic tree by a network
and endorsing it by the un-fold of the network.
To state it, we require a further map which we define next.

Suppose that $N$ is a phylogenetic network on $X$
and $C$ is an $X$-set of $\M=\U(N)$. Then for any
tree $T'$ obtained from $M_C^+$ by suppressing some (but not necessarily all!)
of its vertices of indegree and outdegree one
it follows that there exists an injective  map 
$P_{T'}:V(T') \to \pi^-(N)$ which associates to each 
vertex $v$ of $T'$ a (necessarily unique) 
directed path $P_{T'}(v) \in \pi^-(N)$. Note that in
case $M_C$ is displayed by $N$ there exists a subgraph
$N'$ of $N$ that is a subdivision of $M_C$ such that,
for all vertices $v\in V(T')$,
the directed path $P_{T'}(v)$ does not cross any arc of $N$ that
is not also contained in $N'$.

\begin{lemma}\label{lmwd}
Suppose that $N$ is a phylogenetic network on $X$ and that $T$ is a 
phylogenetic tree on $X$ displayed by $N$. 
Then $T$ is endorsed by $\U(N)$ via some $X$-set $C$ of $\U(N)$. Moreover, 
$C$ can be chosen in such a way that 
$\Psi_{N}\circ P_{T}=\xi_C$.
\end{lemma}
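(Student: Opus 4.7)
The plan is to construct the $X$-set $C$ explicitly from the subgraph of $N$ witnessing the display of $T$, and then to verify both claims by comparing $\Psi_N\circ P_T$ with $\xi_C$ vertex by vertex. Since $T$ is displayed by $N$, fix a subgraph $N'\subseteq N$ with $L(N')=X$ together with an isomorphism $\eta:T'\to N'$ that is the identity on $X$, where $T'$ is a subdivision of $T$. For each vertex $v\in V(T)\subseteq V(T')$, write $v^*:=\eta(v)$; because every non-leaf $v$ of $T$ has outdegree two in $T'$, each such $v^*$ has outdegree at least two in $N$ and is therefore either a leaf or a tree vertex of $N$. Fix any directed path $Q$ in $N$ from $\rho_N$ to $\rho_{T'}^*$, which exists because every vertex of $N$ is reachable from $\rho_N$.

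For each $v\in V(T)$, let $P_T(v)$ be the concatenation of $Q$ with the unique directed path in $N'$ from $\rho_{T'}^*$ to $v^*$; by the previous paragraph, $P_T(v)\in\pi^-(N)$, and $P_T(x)\in\pi_x(N)$ for every $x\in X$. Set $C:=\{\psi_N(P_T(x)):x\in X\}$. Since $\psi_N$ carries $\pi_x(N)$ into $\phi_N(x)$ injectively, $C$ is an $X$-set of $\U(N)$. The key property I would exploit next is that $\U(N)$ encodes the prefix order on $\pi^-(N)$ (after the suppression step in the un-fold construction), so whenever $v$ is an ancestor of $w$ in $T$, $\Psi_N(P_T(v))$ is an ancestor of $\Psi_N(P_T(w))$ in $\U(N)$. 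Because $\rho_T$ has outdegree two in $T$, the paths $P_T(x)$ diverge in $N$ immediately after $\rho_{T'}^*$, so $\Psi_N(Q)=\Psi_N(P_T(\rho_T))$ has outdegree at least two in $\U(N)$ and equals $lca_{\U(N)}(C)$. More generally, for each non-leaf $v\in V(T)$ with descendant leaves $L_v\subseteq X$, one verifies $\Psi_N(P_T(v))=lca_{\U(N)}(\{\psi_N(P_T(x)):x\in L_v\})$.

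The final step is to show that any vertex of $M_C^+$ not already in the image of $\Psi_N\circ P_T$ lies strictly between some $\Psi_N(P_T(v))$ and $\Psi_N(P_T(w))$, where $w$ is the child of $v$ in $T$ whose branch the vertex lies on; the injectivity of $\psi_N$ together with the fact that $C$ is defined using only paths running inside $N'$ ensures that any sibling branches at such intermediate vertices do not reach $C$, so the vertex has in- and outdegree one inside $M_C^+$ and is therefore suppressed when forming $M_C$. Consequently the image of $\Psi_N\circ P_T$ equals $V(M_C)$, with the induced ancestor structure matching that of $T$, yielding the endorsement $T\cong M_C$. Since $\xi_C$ sends each leaf $x$ of $M_C$ to the unique element of $C\cap\mu(x)=\{\psi_N(P_T(x))\}$ and extends canonically to each non-leaf of $M_C$ as the corresponding last common ancestor in $\U(N)$, the identification in the previous paragraph yields $\xi_C=\Psi_N\circ P_T$. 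The main obstacle will be precisely this last bookkeeping argument: verifying that no ``spurious'' vertex of $M_C^+$ survives suppression, which requires a careful case analysis of sibling branches at the intermediate subdivision vertices of $T'$ when viewed inside $N$.
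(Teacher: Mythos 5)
Your argument is correct, but it takes a genuinely different route from the paper: the paper offers no proof of this lemma, instead deducing it from the characterization of weakly displayed trees in Theorem~6 of [HMSW16], whereas you re-derive the needed statement from scratch. Your construction is sound: since every non-leaf vertex of $T$ has outdegree two in the subdivision $T'$, its image in $N$ has outdegree two and is therefore a non-leaf tree vertex, so each $P_T(v)$ (a fixed path $Q$ from $\rho_N$ to the image of $\rho_T$, followed by the unique path inside $N'$) indeed lies in $\pi^-(N)$; the ancestor order in $\U(N)$ is exactly the prefix order on $\pi^-(N)$, and your lca identities hold because both outgoing arcs of the image of a non-leaf vertex of $T$ lie in $N'$ and separate leaves of $C$. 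The bookkeeping step you flag as the main obstacle does go through: any vertex of $M_C^+$ outside the image of $\Psi_N\circ P_T$ equals $\Psi_N(Q\cdot W)$ with $W$ a path in $N'$ ending at the image of a subdivision vertex of $T'$ that is a tree vertex of $N$; every element of $C$ is by construction $\psi_N(Q\cdot W_y)$ with $W_y$ running inside $N'$, so any such path through that vertex must continue along its unique outgoing arc in $N'$, and since distinct root-to-$x$ paths yield distinct leaves of $\U(N)$, a sibling branch leaving $N'$ (even if it later re-enters $N'$ and reaches a leaf of $N$) contributes no element of $C$; hence the vertex has in- and outdegree one in $M_C^+$ and is suppressed, while non-leaf image vertices keep outdegree two and survive, giving $V(M_C)=\mathrm{im}(\Psi_N\circ P_T)$, the isomorphism $T\cong M_C$, and the identity $\Psi_N\circ P_T=\xi_C$. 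What your approach buys is a self-contained proof that exhibits $C$ explicitly and makes the compatibility $\Psi_N\circ P_T=\xi_C$ transparent, which is precisely what the proof of Theorem~\ref{thfm} later relies on; what the paper's citation buys is brevity, at the cost of leaving that compatibility implicit in the external construction.
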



Note that it
is not difficult to see that the converse of Lemma~\ref{lmwd}
need not hold.




To be able to establish Theorem~\ref{thfm} which is the main
result of this section, we next introduce two crucial maps.
Suppose that $N$ is a phylogenetic network on $X$ and $T$
is a phylogenetic tree  on $X$ that is displayed by
$N$. Let $C$ be an $X$-set of $\M=\U(N)$ such that $T$ is endorsed
by $\M$ via $C$. Then we put 
$$
\overline{\xi_C}:V(M_C)\to V(\M)/\sim ;\,\,\,\, v\mapsto p_{\M}\circ \xi_C(v)
$$
and 
$$
\overline{\xi_C^+}:V(M_C^+)\to V(\M)/\sim ;\,\,\,\, v\mapsto
p_{\M}\circ \xi_C^+(v).
$$
%
Note that the latter map was already mentioned in the introduction
and is central for the remainder of the paper.

In addition, we denote by $end:\pi^-(N)\to V^-(N)$ a map that
associates to a path $P\in \pi^-(N)$ the tree vertex of $N$
in which $P$ ends. Also, we denote by $f$
the map
$end \circ \Psi_{N}^{-1} :V(\M)\to V^-(N)$ and
by $\iota_1:V(M_C)\to V(M_C^+)$ the map that
maps every vertex in $V(M_C)$ to itself.

For $N$ a stable phylogenetic network on $X$ and $T$ a phylogenetic
tree on $X$ that is displayed by $N$, we next
summarize the main maps considered in this
paper in Figure~\ref{cdl} for the convenience of the reader.
Note that, by Lemma~\ref{lmwd}, $T$ is also endorsed by $\U(N)$
and that
the $X$-set $C\subseteq L(\U(N))$ is chosen
in such a way that $T$ and $M_C$ are isomorphic and that
$\Psi_{N}\circ P_{T}=\xi_C$.

\begin{figure}[h]
\begin{center}
\includegraphics[scale=0.7]{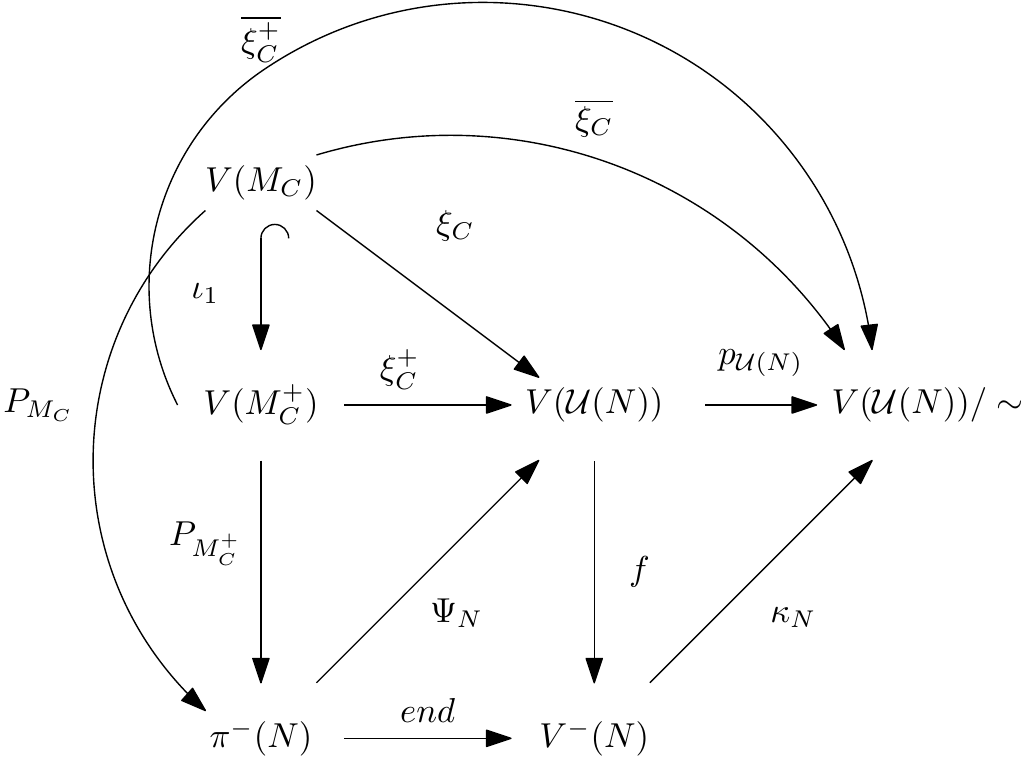}
\caption{For $N$ a stable phylogenetic network on $X$
  and $C$ an $X$-set of $\U(N)$ such that $M_C$ is displayed by $N$,
we depict the main maps that play a role in this paper in the form of a
commutative diagram. Note that since $N$ is stable, the phylogenetic
networks $F(\U(N))$ and $N$ are isomorphic.
}
\label{cdl}
\end{center}
\end{figure}

Armed with these definitions we are now ready 
to state the main result of this section. Informally speaking,
it ensures that a phylogenetic 
tree $T$ is displayed by a stable phylogenetic network $N$ if and only if 
$T$ can be embedded (in the graph-theoretical sense)
by $\overline{\xi_{C}^+}$ into  $\U(N)$ so that it is endorsed by
$\U(N)$ via some subset $C\subseteq L(\U(N))$ 
.

\begin{theorem}\label{thfm}
Suppose that  $N$ is a stable phylogenetic network on $X$ and that  $T$ is 
a phylogenetic tree on $X$. Then, $T$ is displayed by $N$ if and 
only if there exists an $X$-set $C$ of $\U(N)$ that endorses $T$ and 
the map $\overline{\xi_{C}^+}$ is injective.
\end{theorem}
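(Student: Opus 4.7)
The plan is to treat the two directions separately, using the un-fold/fold-up correspondence together with Lemmas~\ref{lmsim} and~\ref{lmwd} and Observation~\ref{isosub}.

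For the forward direction, I would first invoke Lemma~\ref{lmwd} to pick an $X$-set $C$ of $\U(N)$ endorsing $T$ and satisfying $\Psi_{N}\circ P_{T}=\xi_C$, and simultaneously fix a subgraph $N'$ of $N$ isomorphic to a subdivision of $T$ that witnesses the display. Taking $T'=M_C^+$ in the remark preceding Lemma~\ref{lmwd} gives $P_{M_C^+}(v)=\Psi_{N}^{-1}(\xi_C^+(v))$, and that each such path lies inside $N'$. If $\overline{\xi_C^+}(u)=\overline{\xi_C^+}(v)$ for distinct $u,v\in V(M_C^+)$, then $\xi_C^+(u)\sim \xi_C^+(v)$; since a strict ancestor cannot be $\sim$-related to its descendant (the corresponding subMUL-trees would be strictly nested but isomorphic), the MUL-trees $\M_u$ and $\M_v$ are distinct and isomorphic. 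Observation~\ref{isosub} then produces a reticulation cycle inside $\Psi_{N}^{-1}(\xi_C^+(u))\cup \Psi_{N}^{-1}(\xi_C^+(v))\subseteq N'$, contradicting the fact that $N'$, being a subdivision of a tree, contains no pair of arc-disjoint directed paths sharing both endpoints.

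For the backward direction, I would build a displaying subdivision from the data directly. Set $\eta:=f\circ \xi_C^+ :V(M_C^+)\to V^-(N)$. Because $N$ is stable, Lemma~\ref{lmsim} applies to $\M=\U(N)$ and identifies the fibres of $f$ with $\sim$-classes, so the hypothesised injectivity of $\overline{\xi_C^+}$ forces $\eta$ to be injective. For each arc $(u,v)\in A(M_C^+)$, define the extension path $E_{u,v}\subseteq N$ to be the sequence of arcs one must append to $\Psi_{N}^{-1}(\xi_C^+(u))$ in order to realise $\Psi_{N}^{-1}(\xi_C^+(v))$; the construction of $\U(N)$ by suppressing hybrid-ending paths of $U^\ast(N)$ forces the interior vertices of $E_{u,v}$ to be hybrid vertices of $N$, while its endpoints are tree vertices. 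The candidate subdivision is $N':=\bigcup_{(u,v)\in A(M_C^+)}E_{u,v}$, and the identity $\eta(x)=x$ for $x\in X$ (since $\Psi_{N}^{-1}(\xi_C(x))$ ends at the leaf $x$ of $N$) ensures that the corresponding isomorphism is the identity on $X$.

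The decisive step is the verification that $N'$ really is a subdivision of $M_C^+$, and hence of $T\cong M_C$. Suppose two distinct arcs $(u_1,v_1),(u_2,v_2)\in A(M_C^+)$ give extension paths that share an interior vertex $w$. Since $w$ is hybrid with outdegree one, the two paths are forced to coincide beyond $w$; whichever terminates first would then have its tree-vertex endpoint land among the hybrid interior vertices of the other, a contradiction. Hence the two paths end at a common tree vertex, so $\eta(v_1)=\eta(v_2)$, and injectivity of $\eta$ combined with the uniqueness of parents in the tree $M_C^+$ forces $(u_1,v_1)=(u_2,v_2)$. I expect this gluing argument to be the main obstacle, since it requires tracking the enforced tree/hybrid alternation along each $E_{u,v}$. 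The whole approach hinges on Lemma~\ref{lmsim}, which is precisely where stability of $N$ enters, as it is the induced bijection $\kappa_N$ between tree vertices of $N$ and $\sim$-classes in $V(\U(N))$ that allows the combinatorial injectivity hypothesis to be converted into the geometric disjointness needed to assemble $N'$.
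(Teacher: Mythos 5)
Your proof is correct, and its overall skeleton matches the paper's (Lemma~\ref{lmwd} to fix a compatible $X$-set $C$ in the forward direction; stability via Lemma~\ref{lmsim} and the bijection $\kappa_N$ to turn injectivity of $\overline{\xi_C^+}$ into injectivity of $f\circ\xi_C^+$ in the backward direction), but both halves are argued through a somewhat different mechanism. In the forward direction the paper applies Lemma~\ref{lmsim} directly: if $\xi_C^+(w)\sim\xi_C^+(w')$ then the two paths end at the same tree vertex of $N$, and identifying vertices of $M_C^+$ with their images in the displaying subgraph forces $w=w'$. You instead route the contradiction through Observation~\ref{isosub}: the two $\sim$-equivalent vertices sit in distinct isomorphic subMUL-trees, so the union of the corresponding paths contains a reticulation cycle, which cannot live inside a subdivision of a tree. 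This is a valid alternative (note that since both paths share the prefix $\Psi_N^{-1}(r_C)$, the divergence and hence the reticulation cycle occurs below the root of $N'$, so the slight imprecision in writing the whole union as a subset of $N'$ is harmless and mirrors the paper's own remark preceding Lemma~\ref{lmwd}); what it buys is independence from the identification of $V(M_C^+)$ with vertices of $N$, at the cost of invoking the (unproved-in-paper) Observation~\ref{isosub}. In the backward direction the paper compresses the final step into the assertion that the injective, ancestor-preserving map $f\circ\xi_C^+$ ``induces an embedding'' of $M_C^+$ into $N$; you make this explicit by realising each arc of $M_C^+$ as an extension path $E_{u,v}$ whose interior vertices are hybrid, and checking that distinct extension paths cannot share an interior vertex (outdegree-one forcing plus injectivity of $\eta$ and uniqueness of parents in a tree), so that their union really is a subdivision of $M_C^+$ with the identity on $X$. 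That gluing verification is exactly the detail the paper leaves implicit, so your write-up is, if anything, more complete on this point.
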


\begin{proof}
  Note first that since $N$ is stable we may assume without
  loss of generality that 
$N$ is in fact the fold-up $F(\M)$ of a sound MUL-tree $\M=(M,\mu)$ on $X$.
  In view of Lemma~\ref{lmsim}, we therefore have that the map
  $\kappa_N:V^-(N)\to V(\M)/\sim$
in Figure~\ref{cdl} is bijective.
Also note that we may assume without loss of generality that $N$ is not
a phylogenetic tree on $X$ as otherwise the theorem clearly holds.

Assume first that $T$ is displayed by $F(\M)$.
Then, by Lemma~\ref{lmwd}, $T$ is endorsed by $\M$ via some
$X$-set $C$ of $\M$. Thus, $T$ and $M_C$ must be isomorphic.
Without loss of generality, we may therefore assume
that $T$ is in fact $M_C$. Furthermore
and again by Lemma~\ref{lmwd} we may assume that $C$ is such
that $\Psi_{N}\circ P_{M_C}=\xi_C$.

To see that the  map $\overline{\xi_{C}^+}$ is injective assume
 that there exist $w,w'\in V(M_C^+)$ such that
 $\overline{\xi_{C}^+}(w)=\overline{\xi_{C}^+}(w')$.
 Then $\xi_{C}^+(w) \sim \xi_{C}^+(w')$. By Lemma~\ref{lmsim} combined with the
 assumption that  $\Psi_{N}$ is bijective it follows that 
the directed paths $\Psi_{N}^{-1}(\xi_{C}^+(w))$ and
$\Psi_{N}^{-1}(\xi_{C}^+(w'))$
 must end in the same tree vertex $v$ contained in $V^-(F(\M))=V^-(N)$.
Since $w=end\circ P_{M_C^+}  (w)=end\circ \Psi_{N}^{-1}(\xi_{C}^+(w))=v=
end\circ \Psi_{N}^{-1}(\xi_{C}^+(w'))=end\circ P_{M_C^+}  (w')=w'$
 it follows that $\overline{\xi_{C}^+}$ is injective.

Conversely, assume that there exists an $X$-set $C$ of $\M$ that endorses 
$T$ and $\overline{\xi_{C}^+}$ is injective. Then we may assume again without
loss of generality that $T$ is in fact $M_C$. Since  $\overline{\xi_{C}^+}$
is injective and $\kappa_{N}:V^-(N)\to
V(\M)/\sim$ is bijective, it follows that the map 
$f\circ\xi_C^+:V(M_C^+)\to V^-(N)$ is injective. Hence, any two distinct
vertices in $V(M_C^+)$ get mapped to two distinct tree vertices of $N$.
Since $f\circ\xi_C^+$ preserves the ancestor relationships in $M_C^+$
it follows that $f\circ\xi_C^+$ induces an embedding of $M_C^+$ into $N$.
Since $M_C^+$ is a subdivision of $M_C$ it follows that
$M_C$ is displayed by $N$.
\end{proof}

As it turns out, the equivalence postulated in Theorem~\ref{thfm} 
does not hold in case $N$ is not stable. To illustrate this fact, consider 
the phylogenetic network $N$
depicted in Figure~\ref{figfun}(i). Then $N$ is not
stable as $F(\U(N))$ is the phylogenetic network
pictured in Figure~\ref{figfun}(iii). As is easy to
check the triplet $\tau=23|1$ is displayed by $N$.
Also there is a unique choice of the $X$-set $C$
of $\U(N)$ such that $\tau$ is endorsed by
$\U(N)$ via $C$. However for that choice of $C$ the map
$\overline{\xi_C^+}$ is not injective as
$\kappa_{N}^{-1}\circ\overline{\xi_C^+}$ maps the
vertices $v$ and $w$ of $\tau$ indicated
in $\U(N)$ to the unique parent of leaves 1 and 2 in
$F(\U(N))$.

\section{Stable phylogenetic networks that are tree-based}
\label{sec:bijective}


As was observed in e.\,g.\,\cite{St16}, some (but not all!) phylogenetic
networks can be thought of as phylogenetic trees
to which additional arcs have been added carefully. Motivated by
this observation, we next  
turn our attention to characterizing such networks in case
they are stable.
We start with introducing terminology from
\cite{JI17} which we present within our framework.

Suppose that $N$ is
a phylogenetic network $N$ on $X$. Then we say
that $N$ is {\em tree-based} if there exists a phylogenetic
tree $T$ on $X$, called
the {\em base tree of $N$}, such  that
$N$ can be obtained from $T$ by applying the following $3$-step process. 
First, subdivide some of the arcs of $T$ by
adding new vertices to them to obtain a
rooted tree whose leaf set is $X$.
Next, join these subdivision vertices by adding arcs between them,
ensuring that no parallel arcs or directed cycles are created.
Finally, suppress all subdivision vertices in the resulting rooted DAG
whose indegree and outdegree is one. Note that, in addition to
this definition of tree-based, \cite{JI17} also present an alternative
equally attractive definition of a non-binary tree-based phylogenetic network.

Clearly, a tree-based phylogenetic network may 
have more than one base tree. Also, all base trees 
of a tree-based network $N$ are displayed by $N$, 
although the converse is not true in general.
An example for this is furnished by the
dashed phylogenetic tree $T$ on $\{1,\ldots, 4\}$
in Figure~\ref{favexpl}(i),
which is clearly displayed by
the phylogenetic network $N$ depicted in that figure.
However, although  $T$ is a base tree for $N$ it is easy
to check that the embedding of $T$ given in terms of
the dashed edges does not lend itself as the starting point
for constructing $N$ from $T$ as described above 
 (see, for example, \cite{Se16}
for a discussion on the relationship between a tree-based
phylogenetic network and a phylogenetic tree displayed
by a phylogenetic network). As
is easy to check, if the vertex set of $T$ is mapped into
the vertex set of $N$ such that all tree vertices of $N$
are used then $T$ is not only displayed
by $N$ but also is a base tree of
$N$. To demonstrate that this relationship is not a coincidence
is the purpose of the next result. To be able to establish it,
we say that a phylogenetic network $N$ is {\em compressed} if 
it does not contain an arc $a$ such that
$tail(a)$ and $head(a)$ are both hybrid vertices.

\begin{theorem}\label{thtb}
Let $N$ be a stable phylogenetic network on $X$ and let $T$ 
be a phylogenetic tree on $X$. Then $T$ is a base tree for $N$ if 
and only if there exists an $X$-set $C$ of $\U(N)$ that endorses $T$ 
and the map $\overline{\xi_{C}^+}$ is bijective.
\end{theorem}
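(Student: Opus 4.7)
The approach is to pivot off Theorem~\ref{thfm}, which already encodes ``$T$ is displayed by $N$'' as injectivity of $\overline{\xi_C^+}$; what Theorem~\ref{thtb} adds is to upgrade ``displayed'' to ``base tree'' by demanding surjectivity as well. The key identification comes from Lemma~\ref{lmsim}: since $N$ is stable, $\kappa_N: V^-(N) \to V(\U(N))/\sim$ is a bijection, and tracing through the definitions one verifies $\kappa_N \circ \overline{\xi_C^+} = f \circ \xi_C^+$. Consequently, surjectivity of $\overline{\xi_C^+}$ is equivalent to the embedding $f \circ \xi_C^+$ of $M_C^+$ into $N$ reaching every vertex of $V^-(N)$.

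For the forward direction, suppose $T$ is a base tree of $N$. Unpacking the three-step construction one extracts a subdivision $T_*$ of $T$ sitting as a subgraph of $N$ with $V(T_*) = V(N)$, so in particular $V^-(N) \subseteq V(T_*)$. Lemma~\ref{lmwd} then supplies an $X$-set $C$ of $\U(N)$ with $\Psi_N \circ P_T = \xi_C$ encoding this embedding, so Theorem~\ref{thfm} yields injectivity of $\overline{\xi_C^+}$. For surjectivity, given $t \in V^-(N)$ pick the unique $T_*$-path from $\rho_N$ through $t$ to a leaf $x \in X$; its prefix ending at $t$ is an element of $\pi^-(N)$ whose image under $\Psi_N$ lies in $V(M_C^+)$, and its class under $\kappa_N$ is $[t]$. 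Hence $\overline{\xi_C^+}$ is surjective, and therefore bijective.

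For the backward direction, suppose $\overline{\xi_C^+}$ is bijective. Theorem~\ref{thfm} gives that $T = M_C$ is displayed by $N$ via the embedding induced by $f \circ \xi_C^+$, and surjectivity says this embedding visits every tree vertex of $N$. We realise $T$ as a base tree by running the three-step construction: (i) subdivide each arc $e = (u,v)$ of $T$ by inserting, in order, the internal vertices of the directed $N$-path prescribed by the embedding between $f\circ\xi_C^+(u)$ and $f\circ\xi_C^+(v)$; (ii) add every arc of $N$ not yet placed as a step-(ii) arc between the appropriate subdivision vertices; (iii) observe that no suppression is required, since every inserted subdivision vertex is either a tree vertex of $N$ (acquiring a second outgoing arc in step (ii)) or a hybrid vertex of $N$ (acquiring a second incoming arc in step (ii)). No parallel arcs or directed cycles are introduced in step (ii) because all these arcs are arcs of the DAG $N$ itself.

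The central delicacy in the backward direction is verifying that step (i) places \emph{every} hybrid vertex of $N$ as a subdivision vertex, since hybrids are not themselves in $V(M_C^+)$ and must be captured as intermediate vertices of the $N$-paths realising the arcs of $M_C^+$. The enabling observation is that every stable phylogenetic network is compressed: in the fold-up, only cut-arcs $a$ with $head(a)$ a tree vertex qualify as incoming arcs of a maximal inextendible subMUL-tree, and the fold operation then produces a new hybrid whose unique child is $head(a)$. Given compressedness, each hybrid $h$ of $N$ has a tree-vertex child $t$; surjectivity places $t$ in the image of $f\circ\xi_C^+$; and the in-arc of the $M_C^+$-preimage of $t$ traces a directed $N$-path ending at $t$ which must pass through $h$, since $h$ is the only parent of $t$ in $N$. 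Hence $h$ is captured by step (i), and the construction reproduces $N$ from $T$.
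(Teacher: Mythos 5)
Your argument is correct and follows essentially the same route as the paper: injectivity of $\overline{\xi_C^+}$ via Theorem~\ref{thfm}, surjectivity via prefixes of root-to-leaf paths of the spanning subdivision and the bijection $\kappa_N$ from Lemma~\ref{lmsim}, and compressedness of stable networks (each hybrid has a tree-vertex child whose unique in-arc forces the hybrid onto a traced path), with the paper merely packaging the bijective-implies-base-tree direction as a proof by contradiction where you give a direct construction. One cosmetic slip: the identification should read $\overline{\xi_C^+}=\kappa_N\circ f\circ\xi_C^+$ (equivalently $f\circ\xi_C^+=\kappa_N^{-1}\circ\overline{\xi_C^+}$), since $\kappa_N$ maps $V^-(N)$ to $V(\U(N))/\sim$ and hence cannot be composed after $\overline{\xi_C^+}$.
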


\begin{proof}
Without loss of generality,
we may assume that $N$ is not a phylogenetic tree as otherwise the result is
trivial. Since $N$ is stable, we may assume without loss of
generality that there exists a sound  MUL-tree $\M=(M,\mu)$
such that $\M=\U(N)$. Moreover the map $\kappa_{N}:V^-(N)\to V(\M)/\sim$
must be bijective.

Assume first that there exists an $X$-set $C$ of $\M$ 
such that $T$ is endorsed by $\M$ via $C$
and that  $\overline{\xi_C^+}$ is bijective.
Then we may assume without loss of generality
that $T$ is in fact $M_C$.  Suppose for contradiction that $M_C$
is not a base tree of $N$. Then there exists a vertex $v$ in $N$
that is neither contained in $V(M_C)$ nor obtained from
$M_C$ by the 3-step process described above. Since $M_C$ is
displayed by $N$ in view of Theorem~\ref{thfm}
and $f\circ\xi_C^+: V(M_C^+)\to V^-(N)$ is bijective as both $\kappa_N$ and 
$\overline{\xi_C^+}$ are bijective, it follows that
$v$ must be a hybrid vertex of $N$.
Since $N$ is stable and therefore compressed by \cite[Theorem 1]{HMSW16},
it follows that the unique child $c\in V(N)$ of $v$ as well as all
parents of $v$ must be tree vertices of $N$.
Let $p\in V(N)$ denote a parent of $v$. 
Then the bijectivity of $f\circ \xi_C^+: V(M_C^+)\to V^-(N)$
ensures that $p$ and
$c$ are mapped to distinct vertices in $M_C^+$ under $ (f\circ \xi_C^+)^{-1}$.
Note that since the map $ (f\circ\xi_C^+)^{-1}$ is ancestor-preserving,
$c''=(f\circ \xi_C^+)^{-1}(c)$ must be strictly below
$p''=(f\circ \xi_C^+)^{-1}(p)$. Hence,  $c''$ and $p''$
must either be vertices of $M_C$ or are subdivision vertices added to
$M_C$ in the aforementioned 3-step process. Since $M_C$
is displayed by $N$ in view of Theorem~\ref{thfm}, it follows
that $p''$ to $c''$  must
also be vertices of $N$, and that the directed path from $p''$ to $c''$
in $M_C^+$ induces a directed path from  $p''$ to $c''$
in $N$ that must cross $v$. Hence, $T$ must be a base tree for
$N$ which is impossible.

Conversely, assume that $T$ is a support
tree for $N$. Then $T$ is displayed by $N$.
By Theorem~\ref{thfm}, it follows that there
exists an $X$-set $C$ of $\M$ such that $T$ is  endorsed
by $\M$ via $C$ and $\overline{\xi_{C}^+}$ is injective.
It therefore remains to show that $\overline{\xi_{C}^+}$
is surjective. Since we may assume without loss of generality
that $T$ is in fact $M_C$ as $T$ is endorsed by $\M$
via $C$, it suffices to show that the map
$end\circ\Psi_{N}^{-1}\circ \xi_C^+:V(M_C^+)\to V^-(N)$ 
is surjective. 
Suppose that $v\in V^-(N)$. Then $v$ is a tree vertex of $N$.
Let $x\in X$  and let $P_v$ be the directed subpath of $P_{M_C}(x)$
starting at $\rho_N$ and ending in $v$. Since $\Psi_{N}$
preserves the ancestor relationships, it follows that, in 
$\M$,  the vertex $\Psi_{N}(P_v)$
lies on the directed path  from $\rho_{M}$ to the vertex
$\Psi_{N}(P_{M_C}(x))$. Since $M_C$ is a base tree for $N$,
there exists some $u\in V(M_C^+)$ such that
$\xi_C^+(u)=\Psi_{N}(P_v)$. Hence, $\Psi_{N}^{-1}\circ\xi_C^+(u)=P_v$. Since
$v$ is the end vertex of $P_v$ it follows that
$end\circ\Psi_{N}^{-1}\circ\xi_C^+(u)=v$. Thus, 
$end\circ\Psi_{N}^{-1}\circ\xi_C^+$ is surjective, as required. 
\end{proof}

As in the case of Theorem~\ref{thfm}, the equivalence 
postulated by Theorem~\ref{thtb} does not
hold in case the phylogenetic network $N$ 
is not stable. Consider again the phylogenetic networks
$N$ and $N'=F(\U(N))$ depicted in Figure~\ref{figfun}(i)
and (iii) respectively. 
Then the triplet $23|1$
 is a base tree for $N$ but not for $N'$. Hence,
by Theorem~\ref{thtb}, there exists no
$X$-set $C$ of $\U(N')$ that endorses that triplet
and $\overline{\xi_{C}^+}$ is
bijective.

\section{Stable phylogenetic
  networks that are tree-child or reticulation-visible}
\label{sec:tree-child}

In this section, we turn our attention to
clarifying the relationship between various
key concepts originally introduced for binary
phylogenetic networks. Our particular focus
lies on the notion of a tree-child phylogenetic network \cite{CRV07}
which has attracted a considerable amount of attention
in the literature and also reticulation-visible networks
\cite{HRS10}. These concepts are closely related in that a phylogenetic
network is reticulation-visible whenever it is tree-child
(see e.\,g.\,\cite{St16} in case the network in question is binary).
As an immediate consequence of the main result in this
section (Theorem~\ref{thtc}), we see that for stable
phylogenetic networks both properties are also captured
by our map  $\overline{\xi_C^+}$.


We start with presenting formal definitions for both concepts.
Let $N$ be a phylogenetic network on $X$. We say that an
interior vertex $v$ of
$N$ is \emph{tree-child} if $v$ has a child $u$ that is a tree vertex.
More generally, we say that $N$ is  \emph{tree-child}
if all interior vertices of $N$ are tree-child. As an immediate
consequence of \cite[Corollary 1]{HMSW16} it follows that
any binary phylogenetic network that is tree-child must also be stable.
It should however be noted that the converse is not true in general
(see e.\,g.\,the network
depicted in Figure~\ref{favexpl}(i) which is stable but not
tree-child).

We show next that any tree-child network can also be seen as
a stable phylogenetic network with an extra property in terms of
displaying a phylogenetic tree. To do this, we require a
further concept which we introduce next.
Suppose  $N$ is a phylogenetic network on
$X$ and $w\in \V(N)$. Then we
say that $w$ is a \emph{vertex-stable ancestor}
of a leaf $x \in X$ if $w$ belongs to all directed paths from the
root $\rho_N$ of $N$ to $x$. Note that vertex-stable ancestors were
called ``stable'' ancestors in \cite{HRS10} and that a hybrid vertex
that is a vertex-stable ancestors of some leaf is called ``visible''
in  \cite{HRS10}. Also note that the root of
a phylogenetic network $N$ is a vertex-stable ancestor for each leaf of $N$.

The proof of the following result was outlined in \cite[page 249]{St16}.
We include it for completeness sake as our notion of a phylogenetic network
is slightly different from the one in \cite{St16}.

\begin{lemma}\label{lmtc}
  Let $N$ be a phylogenetic network on $X$. Then $N$ is tree-child if
  and only if for all vertices $v \in \V(N)$, there exists a leaf
  $x_v\in X$ such that $v$ is a vertex-stable ancestor for $x_v$.
  Equivalently, $N$ is tree-child if and only if $N$ is compressed and,
  for all 
  tree-vertices $v \in \V(N)$, there exists a leaf $x_v\in X$
  such that $v$ is a
  vertex-stable ancestor for $x_v$.
\end{lemma}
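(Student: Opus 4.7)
The plan is to prove the two equivalences in turn, establishing the unconditional version first and then deducing the refined statement about compressedness and tree vertices.

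For the first equivalence, the forward direction is constructive: given a tree-child network $N$ and any interior vertex $v$, I would iteratively pick a tree-vertex child (guaranteed by the tree-child property) to obtain a descending path $v = u_0 \to u_1 \to \cdots \to u_k = x_v$ ending at a leaf. Each intermediate $u_i$ for $1 \le i < k$ is an interior tree vertex and the leaf $x_v$ also has indegree one; walking back up from $x_v$ the chain of unique parents retraces this path, so every directed path from $\rho_N$ to $x_v$ must pass through $v$.

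The backward direction is the heart of the proof and where I anticipate the main obstacle. Assume toward a contradiction that some interior vertex $v$ has only hybrid children yet is vertex-stable ancestor of some leaf $x_v$; the goal is to construct a path from $\rho_N$ to $x_v$ that avoids $v$. The central device is that each hybrid child $u$ of $v$ has a parent $w \ne v$, and if $w$ is not a descendant of $v$ then a path $\rho_N \to \cdots \to w \to u \to \cdots \to x_v$ furnishes the required bypass. Existence of such an \emph{escape parent} is enforced by a cycle argument exploiting acyclicity of $N$. If $v$ is itself hybrid, the sole other parent $w$ of its unique child $u$ being a descendant of $v$ would yield a directed cycle $u \to \cdots \to w \to u$. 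If $v$ is a tree vertex with two hybrid children $u_1, u_2$ and neither admits an escape, then each other parent of $u_1$ is forced to be reachable from $v$ only via $u_2$ (and symmetrically for $u_2$), so chaining produces a cycle $u_2 \to \cdots \to u_1 \to \cdots \to u_2$. A final short check, again by the cycle argument, shows that the escape parent of (say) $u_1$ actually suffices for every leaf below $v$: a leaf reachable from $v$ only via $u_2$ would either require $u_2$ to have its own escape (handled directly) or force $u_2$'s other parents to lie below $u_1$, making $u_2$ a descendant of $u_1$ and the leaf also reachable from $u_1$. In every subcase the desired bypass is obtained and the vertex-stability of $v$ is contradicted.

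For the second equivalence, the forward implication is short: tree-child forbids any arc $(u_1, u_2)$ with both endpoints hybrid, since $u_1$ would then be an interior vertex whose unique child is hybrid, so $N$ is compressed; the vertex-stable property for tree vertices is then inherited from the first equivalence. For the backward implication I split by vertex type. For an interior tree vertex, the contradiction argument in the backward direction of the first equivalence applies unchanged since that argument only invokes the vertex-stable-ancestor hypothesis for the vertex under consideration, which is here a tree vertex by assumption. For an interior hybrid vertex, compressedness forces its unique child to be a tree vertex, so tree-child holds for free.
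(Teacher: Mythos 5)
Your proof is correct and takes essentially the same route as the paper's: a descending path through tree-vertex children for the forward direction, a bypass path through a second parent of a hybrid child for the converse, and the same compressed/tree-vertex split for the second equivalence. In fact, you are more explicit than the paper at the one delicate point --- the acyclicity argument ruling out that the escape parents of both hybrid children lie below $v$ --- which the paper asserts in a single sentence without detail.
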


\begin{proof}
  Assume first that $N$ is tree-child. Let $v\in \V(N)$.
 Then there exists a directed path $P:
  v=v_0, v_1, \ldots, v_k$ of vertices of $N$ such that $v_k$ is a leaf
 and $v_i$
  is a tree vertex, for all $0 < i \leq k $.
  Hence, no vertex on $P$ other than possibly $v$
  can be a hybrid vertex of $N$. Thus,
  $v$ must be a vertex-stable ancestor of $v_k$.

  Conversely, assume that $N$ is not tree-child.
  We need to show that there exists an interior
  vertex of $N$ that is not a vertex-stable ancestor of any leaf of $N$.
  To see this note that since $N$ is not tree-child
  there must exist some vertex  $v\in \V(N)$ that is not tree-child.
Hence,  all children
  of $v$ must be hybrid vertices of $N$. Let $x$ be a leaf of $N$ that is 
  below $v$. Then since a hybrid vertex of $N$ cannot be a leaf
  of $N$ there exists a child $w\in \V(N)$ of $v$ that is an ancestor of $x$.
  Since $w$ must be a hybrid vertex by the choice of $v$,
  there must exist a directed path from
  the root $\rho_N$ of $N$ to $x$ that does not cross $v$.
  Since this argument applies to all leaves below $v$, it follows that
  there exists no leaf of $N$ for which $v$ is a vertex-stable ancestor,
  as required.

  To see the second part of the lemma, it suffices to note
  that  the (unique) child of a hybrid vertex in a compressed phylogenetic
  network must be a tree vertex. The stated characterization then follows from
  the straight-forward observation that a hybrid vertex $h$ is a
  vertex-stable ancestor of some leaf $x$ of $N$ if and only if the child of
  $h$ is a vertex-stable ancestor of $x$.
\end{proof}

In a stable phylogenetic
network, tree vertices that are vertex-stable ancestors for
a given leaf can be characterized using the following proposition. To
state it, we denote for an $X$-set $C$
of a MUL-tree $\M$ the last common ancestor of
the elements of $C$ in $\M$ by $r_C$. Note that the pre-image of $r_C$
under $\xi^+_C$ is the root of
$M_C^+$. By abuse of terminology, we denote that root also by
$r_C$.
Also, for two non-empty sets $A$ and $B$
and a map $g:A\to B$, we denote by $g(A)$ the image of $A$ under $g$.

\begin{proposition}\label{prstan}
  Let $N$ be a stable phylogenetic network on $X$ and
  let $v$ be a
  tree vertex in $\V(N)$.
  Then, there exists a leaf $x_v$ of $N$ such that $v$ is a
  vertex-stable ancestor
  of $x_v$ if and only if, for all $X$-sets $C$ of $\M=\U(N)$, 
one of the following two properties
  holds:
\begin{itemize}
\item[(i)] $\kappa_{N}(v)$ belongs to the image set of $V(M_C^+)$
  under $\overline{\xi_C^+}$.
\item[(ii)] $v$ is an ancestor of $\kappa_{N}^{-1}(\overline{r_C})$ in $N$. 
\end{itemize}  
\end{proposition}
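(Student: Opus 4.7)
The plan is to establish each direction by chasing directed paths in $N$ through the bijection $\Psi_{N}$ between $\pi^-(N)$ and $V(\M)$ (with $\M=\U(N)$) and its induced projection $\kappa_{N}$ to $V(\M)/{\sim}$.

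For the forward direction, I would fix an arbitrary $X$-set $C$ of $\M$ and denote by $l$ the unique leaf of $C$ labelled by $x_v$. The path $Q=\Psi_{N}^{-1}(l)$ is the directed path from $\rho_N$ to $x_v$, which by hypothesis passes through $v$. Letting $P$ be its prefix ending at $v$ and setting $u=\Psi_{N}(P)$, one obtains a vertex $u$ that lies on the directed path from $\rho_\M$ to $l$ in $\M$ and satisfies $\overline{u}=\kappa_{N}(v)$. Two cases then arise: if $u$ lies weakly below $r_C$ in $\M$ then $u\in V(M_C^+)$ and $\overline{\xi_C^+}(u)=\kappa_{N}(v)$, giving (i); otherwise $u$ is a strict ancestor of $r_C$, the path $P$ is a strict prefix of $P^{\ast}=\Psi_{N}^{-1}(r_C)$, and so the end vertex $v$ of $P$ is an ancestor of $v^{\ast}=\kappa_{N}^{-1}(\overline{r_C})=\mathrm{end}(P^{\ast})$ in $N$, giving (ii).

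For the converse I argue the contrapositive: assuming that $v$ is a vertex-stable ancestor of no leaf, I construct an $X$-set $C$ witnessing the failure of both (i) and (ii). For each $x\in X$ I fix a directed $\rho_N$-to-$x$ path $Q_x$ avoiding $v$, set $l_x=\psi_{N}(Q_x)$, and form $C=\{l_x:x\in X\}$. For any such ``$v$-safe'' $C$, every leaf of $C$ has all its ancestors in $\M$ outside $K_v:=\kappa_{N}(v)$, so $V(M_C^+)\cap K_v=\emptyset$ and (i) fails. To make (ii) fail I select the $Q_x$'s so that $C$ contains leaves belonging to at least two distinct subtrees rooted at children of $\rho_\M$ in $\M$. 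Such a choice forces $r_C=\rho_\M$, whose $\sim$-class is the singleton $\{\rho_\M\}$ (no strict subMUL-tree of $\M$ is isomorphic to $\M$ itself), and hence $v^{\ast}=\kappa_{N}^{-1}(\{\rho_\M\})=\rho_N$, which is not a descendant of $v$ because $v\neq\rho_N$ (the root being a vertex-stable ancestor of every leaf).

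The main obstacle is showing that such a ``spread'' choice of $Q_x$'s is always available. If $K_v$ lies entirely within a single subtree $\M_{a_i}$ of $\rho_\M$, then all leaves of the remaining $\M_{a_j}$'s are automatically $v$-safe, and combining them with a $v$-safe leaf of $\M_{a_i}$---whose existence for any label whose $\M$-leaves lie only in $\M_{a_i}$ is guaranteed by the hypothesis that $v$ is not a vertex-stable ancestor of that label---yields the desired spread. When $K_v$ straddles several children's subtrees the same strategy works provided each such subtree retains at least one safe leaf; the delicate case is that in which some $\M_{a_j}$ is entirely covered by $K_v$, where one must combine the soundness of $\M$, the fact that $\rho_\M$ has outdegree at least two, and the failure of $v$ to be a vertex-stable ancestor of every leaf reachable via the corresponding child of $\rho_N$ in $N$ to exhibit safe leaves in at least two distinct subtrees. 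This structural case analysis is the principal technical content of the proof.
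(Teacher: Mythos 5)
Your forward direction is correct and is, in substance, the paper's argument (the paper runs it as a contradiction, you run it directly): take the prefix of $\Psi_{N}^{-1}(l)$ ending at $v$, and split according to whether the corresponding vertex of $\M$ lies below $r_C$, giving (i), or strictly above it, giving (ii).

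The gap is in the converse, at exactly the step you defer. Your refutation of (ii) requires choosing the $v$-avoiding paths so that the resulting $X$-set meets two distinct child subtrees of $\rho_{\M}$, forcing $r_C=\rho_{\M}$ and $\kappa_{N}^{-1}(\overline{r_C})=\rho_N$. Such a ``spread'' choice is not available in general, and the delicate case you sketch cannot be rescued by soundness. Concretely, let $\rho_{\M}$ have children $a$ and $a'$, let $\M_{a'}$ be the triplet-shaped tree $((1,2),3)$, and let $a$ have two children, namely the root $d$ of a second copy of $((1,2),3)$ and a leaf labelled $3$. This $\M$ is sound; in $N=F(\M)$ the two copies of $((1,2),3)$ fold onto one tree vertex $\alpha=\kappa_{N}^{-1}(\overline{a'})$ reached through a hybrid vertex $h$ with parents $\rho_N$ and $v:=\kappa_{N}^{-1}(\overline{a})$, and the three leaves labelled $3$ fold onto a hybrid vertex with parents $v$ and $\alpha$. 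Here $v$ is a vertex-stable ancestor of no leaf (every leaf is reachable through $h$ avoiding $v$), yet $\kappa_{N}(v)=\{a\}$ contains a child of $\rho_{\M}$, so every leaf of $\M$ whose root-path avoids $v$ lies below $a'$; consequently no admissible $X$-set meets both child subtrees, $r_C$ is never $\rho_{\M}$, and the reduction your argument rests on is simply unavailable. This is precisely the configuration your ``delicate case'' was supposed to handle, and it cannot be.

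Worse, in this example the only $X$-sets $C$ for which (i) fails have $r_C\in\{a',d\}$, and $v$ is an ancestor of $\alpha=\kappa_{N}^{-1}(\overline{r_C})$ in $N$ via $h$; so no choice of $C$ defeats (ii) in the plain ``there exists a directed path'' sense that your contrapositive (and the statement as literally phrased) needs. For comparison, the paper does not attempt your reduction: it takes the $X$-set assembled from the avoiding paths and uses that their common prefix $\Psi_{N}^{-1}(r_C)$ ends at $\kappa_{N}^{-1}(\overline{r_C})$ while avoiding $v$ -- an argument which refutes ``$v$ lies on every root-path to $\kappa_{N}^{-1}(\overline{r_C})$'' rather than ancestry itself, and whose conclusion the example above shows cannot be strengthened to non-ancestry. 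So the step you flagged as the principal technical content is not a routine case analysis: it is the genuinely problematic point of this proposition, and your proposal does not close it.
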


Rather than continuing with the proof, we next
illustrate Properties~(i) and (ii) for the convenience of the reader.
Consider the phylogenetic
network $N'$ and the MUL-tree $\mathcal M=\mathcal U(N')$ depicted in
Figures~\ref{figfun}(iii) and (ii) respectively. Let $\rho_{N'}$ and
$\rho_{\mathcal M}$
be the root of $N'$ and $\mathcal M$ respectively. Clearly, $\rho_{N'}$
is a vertex-stable ancestor of some leaf of $\mathcal M$.
Consider now the four $X$-sets $C$ of $\mathcal M$. Three of these
$X$-sets $C$ are such that $\rho_M=r_C$ and that, therefore,
$\rho_M$ belongs to the vertex set of $M_C$. In this case
$\overline{\xi_C^+}(\rho_M)$ coincides with $\kappa_{N'}(\rho_{N'})$.
For the fourth $X$-set $C$ i.\,e.\,the one comprising
the three right-most leaves of $\mathcal M$, we have that $r_C=u$.
Since $\kappa_{N'}^{-1}(\overline u)$ is a child of $\rho_{N'}$ in $N'$,
it follows that $\rho_{N'}$ satisfies Property~(ii).

\begin{proof}
Assume first that there exists a leaf $x_v$ of $N$ such
that $v$ is a vertex-stable ancestor of $x_v$. Let $C$ be
an $X$-set of $\M=(M,\mu)$. 
  It suffices to show that if $v$ is not an ancestor of
  $\kappa_{N}^{-1}(\overline{r_C})$ in $N$ then
  $\kappa_{N}(v)\in \overline{\xi_C^+}(V(M^+_C))$.

  Assume for contradiction that this is not the
  case, that is, $v$ is not an ancestor of
  $\kappa_{N}^{-1}(\overline{r_C})$
  in $N$ and  $\kappa_{N}(v)\not \in \overline{\xi_C^+}(V(M_C^+))$.
  Then there exists no vertex $v'\in V(M_C^+)$ such that
  $v$ is contained in the equivalence class generated
  by $v'$ under the projection $p_{\M}: V(\M)\to V(\M)/\sim$. Hence,
  there exists no element
  $x \in X$ and leaf $l\in C$ with $l\in\mu(x)$
  such that the directed subpath of
  $P_{M_C^+}(l)$ of $N$
  that starts at $f(r_C)$ and ends in $x$ contains $v$.
Combined with the fact that, also by assumption,  $v$  is not
an ancestor of $\kappa_{N}^{-1}(\overline{r_C})$ in $N$ it
follows that there exists no element $x\in X$ 
and leaf $l\in C$ with $l\in\mu(x)$ such
that $v$ is a vertex on the directed path $P_{M_C^+}(l)$.
Consequently, for all $x \in X$, there exists a directed path
in $\pi_x(N)$ that does not cross $v$. But then
$v$ cannot be a vertex-stable ancestor of some element
in $X$ which is impossible.

Conversely, assume that $v$ is not a vertex-stable ancestor
of some element of $X$, that is, 
for every element $x \in X$, there exists a path $P_x\in \pi_x(N)$
that does not cross $v$. We need to show that there exists some
$X$-set $C$ of $\M$ such that
neither Property~(i) nor Property~(ii) of the proposition holds.

Put $C:=\bigcup_{x\in X}\{P_x\}$. Clearly $C$ is an $X$-set of
$\M$. For a vertex $w\in V(M_C^+)$ denote by $x_w$
a leaf of $M_C^+$ below $w$. Since $P_{M_C^+}$ is the map that
assigns to a vertex $w\in V(M_C^+)$
the subpath of $P_{x_w}$ in $N$ that starts at $\rho_N$ and ends
in $w$ it follows that the end-vertex of $P_{M_C^+}(r_C)$
is $\kappa_{N}^{-1}(\overline{r_C})$. Thus, $v$ is not an
ancestor of $\kappa_{N}^{-1}(\overline{r_C})$. So Property~(ii) does not hold.
Moreover, by construction, no path in $P_{M_C^+}(V(M_C^+) )$
can contain $v$ as a vertex.  Hence, there cannot exist some
$v'\in V(M_C^+)$ such that $P_{M_C^+}(v')$ ends in $v$.
But then there cannot exist some
$v'\in V(M_C^+)$ such that
$\kappa_N(v)=\kappa_N\circ end\circ P_{M_C^+}(v')=\overline{\xi_C^+}(v')$. Thus,
$\kappa_{N}(v)\not \in \overline{\xi_C^+}(V(M_C^+))$
Hence, Property~(i) does not hold either. This concludes the proof.
\end{proof}

To help illustrate  the consequences of Proposition~\ref{prstan}, we
need to introduce further terminology. For $\M$ a MUL-tree and
$C$ an $X$-set of $\M$, we denote by $V(\M)^C$ the subset of $V(\M)$
containing $r_C$ and all vertices $v$ of $\M$ such that no vertex
$u\in V(\M)$ below $v$ satisfies $u \sim r_C$. In other words, the set $V(\M)^C$
is precisely the set of vertices $v$ of $\M$ such that
the vertex $f(r_C)$ is not below
the vertex  $f(v)$, plus the vertex $r_C$ itself.
Note that $V(\M)^C=V(\M)$ if and only if $r_C$ is the root of $\M$.

To illustrate this definition, consider the MUL-tree $\M$
depicted in Figure~\ref{figfun}(ii), and the $X$-set $C$ of $\M$
that consists of the three leaves below the vertex labelled
$u$. Then $r_C=u$ and $V(\M)^C$ contains all vertices below $u$ and all
 vertices below $v$ because no vertex $v'\in V(\M)-\{v\}$
below $v$ satisfies $v' \sim r_C$. However, the root $\rho_{\M}$ of $\M$ is
not contained in $V(\M)^C$, since $r_C$ is below $\rho_{\M}$ 
and $r_C\sim r_C$ clearly holds.
Furthermore, for $\M$ the MUL-tree depicted in
Figure~\ref{favexpl}(ii) and $C$ the $X$-set that consists of the
leaves of the dashed tree in that figure, we have that $V(\M)^C=V(\M)$.

Armed with this notation and Proposition~\ref{prstan}, we are now
ready to characterize stable phylogenetic networks that are also tree-child
or reticulation-visible in terms of our map $\overline{\xi_C^+}$.
We begin with a
straight-forward result that will turn out to be useful in this
context.

\begin{lemma}\label{lmim}
  Let $N$ be a stable phylogenetic  network on $X$ and
  let $C$ be an $X$-set of $\M=\U(N)$.
  Then $\overline{\xi_C^+}(V(M_C^+))\subseteq V(\M)^C/\sim$.
\end{lemma}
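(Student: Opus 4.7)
The plan is to prove the slightly stronger containment $\xi_C^+(V(M_C^+)) \subseteq V(\M)^C$; the lemma then follows at once by applying the projection $p_{\M}$ on both sides and recalling that $\overline{\xi_C^+} = p_{\M} \circ \xi_C^+$.

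I would start by fixing $w \in V(M_C^+)$ and setting $v := \xi_C^+(w)$. By the construction of $M_C^+$ recalled in Section~\ref{sec:injective}, the vertex $v$ lies on some directed path in $\M$ from $r_C = lca_M(C)$ to a leaf of $C$; in particular, $v$ is either equal to $r_C$ or strictly below $r_C$ in $\M$. If $v = r_C$, then $v \in V(\M)^C$ by the very definition of that set, and there is nothing more to say.

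Suppose instead that $v$ is strictly below $r_C$. I would show $v \in V(\M)^C$ by contradiction: assume some $u \in V(\M)$ below $v$ satisfies $u \sim r_C$. Since ``below'' is reflexive, $u$ could a priori equal $v$, but in either case, $u$ is below $v$ and $v$ is strictly below $r_C$, so $u$ is in fact strictly below $r_C$. Consequently $\M_u$ is a subMUL-tree of $\M_{r_C}$ that does not contain $r_C$, whence $|V(\M_u)| < |V(\M_{r_C})|$. On the other hand, $u \sim r_C$ means by definition that $\M_u$ and $\M_{r_C}$ are isomorphic as MUL-trees, forcing $|V(\M_u)| = |V(\M_{r_C})|$, a contradiction. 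Hence no such $u$ exists and $v \in V(\M)^C$, as required.

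No substantive obstacle is anticipated. The only subtle bookkeeping point is to remember that the relation ``below'' in the definition of $V(\M)^C$ is reflexive, so the contradiction step must (and does) cover $u=v$ as well as $u$ a strict descendant of $v$; the real engine of the argument is simply the finite cardinality remark that a rooted tree cannot contain a proper rooted subtree isomorphic to itself. This is consistent with the authors' characterization of the lemma as straight-forward.
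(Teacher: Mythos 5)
Your proposal is correct, but it follows a genuinely different route from the paper's own argument. You stay entirely inside the MUL-tree $\M$ and use the primary definition of $V(\M)^C$: every vertex of $M_C^+$ is below $r_C$, and no vertex strictly below $r_C$ can have a descendant $u$ with $u\sim r_C$, because a finite rooted tree cannot contain a proper rooted subtree isomorphic to $\M_{r_C}$; this gives the stronger vertex-level containment $\xi_C^+(V(M_C^+))\subseteq V(\M)^C$, which projects under $p_{\M}$ to the stated inclusion into $V(\M)^C/\sim$. Notably, your argument never invokes the hypothesis that $N$ is stable. The paper instead argues by contradiction through the folded network: it works with the restated description of $V(\M)^C$ in terms of $f=end\circ\Psi_{N}^{-1}$ (whose equivalence with the primary definition rests on Lemma~\ref{lmsim}, hence on stability) and notes that if $\overline{\xi_C^+}(v)\notin V(\M)^C/\sim$ then $f(r_C)$ would be below $f(\xi_C^+(v))$, contradicting the fact that $end\circ\Psi_{N}^{-1}\circ\xi_C^+$ preserves the ancestor relationships of $M_C^+$ (strictly speaking, the boundary case $f(r_C)=f(\xi_C^+(v))$ is resolved because then $\xi_C^+(v)\sim r_C$ and its class lies in $V(\M)^C/\sim$ anyway, a point your proof does not even need to address). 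What each buys: your proof is more elementary, proves a slightly stronger statement, and shows the inclusion holds for any phylogenetic network, stable or not; the paper's proof keeps the lemma expressed through the maps $f$ and $\Psi_{N}$ of the commutative diagram, which is the form in which it is reused in the proof of Theorem~\ref{thtc}.
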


\begin{proof}
  Assume for contradiction that there exists a vertex $v \in V(M_C^+)$
  such that $\overline{\xi_C^+}(v) \notin V(\M)^C/\sim$. Then,
  the end vertex of the directed path
  $\Psi_{N}^{-1}(\xi_C^+(\rho_C))$ is below the
  end vertex of the directed path $\Psi_{N}^{-1}(\xi_C^+(v))$.
  But this is impossible since
   $end \circ \Psi_{N}^{-1}\circ \xi_C^+$
  must preserve the ancestor relationships in $M_C^+$ as both
  the maps $\xi_C^+$ and $end \circ \Psi_{N}^{-1}$ preserve them.
  \end{proof}

We are now ready to state the main result of this section
which provides characterizations for when a stable phylogenetic
network is tree-child or reticulation-visible.  A
phylogenetic network $N$ is called \emph{reticulation-visible} if every
hybrid vertex $h$ of $N$ is a vertex-stable ancestor of
some leaf $x_h$ of $N$ \cite{HRS10} -- see
e.\,g.\, \cite{BS16, GGLVZ18} for more
on such networks. Note that Lemma~\ref{lmtc}
 implies that a tree-child network is also reticulation-visible. However
 note that, in general,
 a reticulation-visible network need neither be stable nor tree-child (see
 for example the phylogenetic network depicted in Figure~\ref{figfun}(i)).

\begin{theorem}\label{thtc}
Let $N$ be a stable phylogenetic network on $X$. We have:
\begin{itemize}
\item[(i)] $N$ is tree-child if and only if for all $X$-sets $C$ of
  $\M=\U(N)$
  the sets $\overline{\xi_C^+}(V(M_C^+))$ and $V(\M)^C/\sim$ coincide.
\item[(ii)] $N$ is reticulation-visible if and only if for all $X$-sets $C$ of
$\M=(M,\mu):=\U(N)$ and all non-root vertices $v$ of $\M$  for
which $|p_{\M}(u)| < |p_{\M}(v)|$ holds for the parent $u$ of $v$, we have 
$p_{\M}(v) \in
\overline{\xi_C^+}(V(M_C^+))$.
\end{itemize}
\end{theorem}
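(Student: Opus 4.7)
The plan is to derive both characterisations from Lemma~\ref{lmtc} and Proposition~\ref{prstan}, using stability to ensure that $N$ is compressed via \cite[Theorem 1]{HMSW16}.

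For part~(i), Lemma~\ref{lmtc} says that $N$ is tree-child precisely when every tree vertex of $N$ is a vertex-stable ancestor of some leaf, and by Proposition~\ref{prstan} this is equivalent to the statement that, for every tree vertex $v$ and every $X$-set $C$ of $\M=\U(N)$, property~(i) or property~(ii) of that proposition holds. To connect this with the equality $\overline{\xi_C^+}(V(M_C^+))=V(\M)^C/\sim$, the key observation is that $\overline{v}\in V(\M)^C/\sim$ if and only if the tree vertex $v$ is not a strict ancestor of $\kappa_N^{-1}(\overline{r_C})$ in $N$; this follows by unpacking the definition of $V(\M)^C$ through the bijection $\kappa_N$ and the fact that ancestor relationships on $\sim$-classes in $\M$ correspond to ancestor relationships between the associated tree vertices in $N$. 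Since property~(i) of Proposition~\ref{prstan} is automatic for $v=\kappa_N^{-1}(\overline{r_C})$ (because $r_C\in V(M_C^+)$), and since Lemma~\ref{lmim} gives the inclusion $\overline{\xi_C^+}(V(M_C^+))\subseteq V(\M)^C/\sim$ for free, equality holds for all $C$ precisely when no tree vertex simultaneously violates~(i) and~(ii) for any $C$, which by Proposition~\ref{prstan} is equivalent to $N$ being tree-child.

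For part~(ii), I would first observe that the condition $|p_\M(u)|<|p_\M(v)|$ on $v$ and its parent $u$ in $\M$ characterises exactly the classes $\overline{v}$ whose image $\kappa_N^{-1}(\overline{v})$ is the tree-vertex child of a hybrid in $N$. Indeed, $|\overline{v}|$ equals the number of directed paths from $\rho_N$ to $\kappa_N^{-1}(\overline{v})$ in $N$: if $\kappa_N^{-1}(\overline{v})$ has a tree parent $t$, these paths are in bijection with root-to-$t$ paths so $|\overline{v}|=|\overline{u}|$; whereas if its parent is a hybrid with $k\geq 2$ tree parents, $|\overline{v}|$ equals the sum of path counts to those parents and so strictly exceeds the count for any single parent class. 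Combining this with the second form of Lemma~\ref{lmtc} reduces reticulation-visibility to ``every hybrid-child is a vertex-stable ancestor of some leaf'', and Proposition~\ref{prstan} then translates this to ``for every hybrid-child $c$ and every $X$-set $C$, property~(i) or property~(ii) of the proposition holds''.

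The main obstacle is to upgrade the disjunction ``(i) or (ii)'' to ``(i) alone'' for hybrid-children. The crucial claim is that, in a stable network $N$, no hybrid-child $c$ can be a strict ancestor of $\kappa_N^{-1}(\overline{r_C})$ in $N$ for any $X$-set $C$ of $\M$. Granting this claim, property~(ii) of Proposition~\ref{prstan} can only hold trivially (when $c=\kappa_N^{-1}(\overline{r_C})$), in which case property~(i) also holds, so the disjunction collapses to~(i), giving exactly the theorem's condition $p_\M(v)\in\overline{\xi_C^+}(V(M_C^+))$. Proving this claim will be the hardest step. The proposed route is by contradiction: a strict ancestor relation with $c$ above $\kappa_N^{-1}(\overline{r_C})$ would force every label in $X$ to be represented by a leaf below $c$ in $N$, so every copy of $c$ in $\M$ would root a subMUL-tree containing all labels; combined with the outdegree-two requirement on the tree-vertex parents of $c$'s hybrid parent, this propagates isomorphism upward in $\M$ and forces the fold-up $F(\M)$ either to produce parallel arcs or to identify $c$ with a distinct tree vertex of $N$, both contradicting the stability assumption $N=F(\M)$; the structural argument here is in the same spirit as the compressedness proof of \cite[Theorem 1]{HMSW16}.
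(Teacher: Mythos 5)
Your treatment of part~(i) is correct and is essentially the paper's route (Lemma~\ref{lmtc} plus Proposition~\ref{prstan} plus Lemma~\ref{lmim}); your translation of membership in $V(\M)^C/\sim$ into ``not a strict ancestor of $\kappa_N^{-1}(\overline{r_C})$'' is right, and your converse is in fact a little cleaner than the paper's, which exhibits a violating $X$-set explicitly instead of just invoking the equivalence in Proposition~\ref{prstan}. In part~(ii), your identification of the vertices with $|p_{\M}(u)|<|p_{\M}(v)|$ as exactly the copies of children of hybrid vertices is correct, and your direction ``condition $\Rightarrow$ reticulation-visible'' goes through without any extra input.

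The gap is the crucial claim on which your forward direction rests: it is false. Consider the network $N$ on $X=\{1,2,3,4\}$ with vertices $\rho,p_1,p_2,h,c,t,a,b,h_1,h_2$ and leaves $1,\dots,4$, and arcs $(\rho,p_1),(\rho,p_2),(p_1,h),(p_1,h_1),(p_2,h),(p_2,h_2),(h,c),(c,t),(c,h_2),(t,a),(t,b),(a,1),(a,h_1),(h_1,2),(b,3),(b,h_2),(h_2,4)$; here $h,h_1,h_2$ are the hybrid vertices. Its un-fold is, in Newick-like notation, $((W,2),(W,4))$ with $W=(((1,2),(3,4)),4)$; the unique maximal inextendible subMUL-tree is $W$, and processing $W$, then the repeated leaf $4$, then the repeated leaf $2$ folds this MUL-tree back to exactly $N$, so $N$ is stable. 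Now $c$ is the child of the hybrid $h$, the tree vertex $t$ is strictly below $c$, and every leaf of $N$ lies below $t$. Taking $C$ to be the four leaves of one copy of the subtree $((1,2),(3,4))$ of $\U(N)$, the vertex $r_C$ is the root of that copy, so $\kappa_N^{-1}(\overline{r_C})=t$ and the hybrid-child $c$ is a strict ancestor of it, refuting your claim. Worse for the plan: this $N$ is reticulation-visible ($h$, $h_1$, $h_2$ are vertex-stable ancestors of $1$, $2$, $4$ respectively), the copies of $c$ do satisfy $|p_{\M}(u)|<|p_{\M}(v)|$ (their class has size two, their parents' classes size one), and yet $p_{\M}$ of a copy of $c$ does not lie in $\overline{\xi_C^+}(V(M_C^+))$, because $V(M_C^+)$ consists only of copies of $t$, $a$, $b$ and the leaves. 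So the disjunction in Proposition~\ref{prstan} genuinely cannot be upgraded to its property~(i) alone for hybrid-children, and no proof of the forward implication in the form you aim at can exist; as far as I can check, this also conflicts with the corresponding step of the paper's own argument, which at this point simply asserts that $(\xi_C^+\circ f)^{-1}(w)$ is a vertex of $M_C^+$. Any repair has to retain the alternative ``$f(v)$ is an ancestor of $\kappa_N^{-1}(\overline{r_C})$'' (equivalently, restrict attention to vertices of $V(\M)^C$, as part~(i) implicitly does) rather than attempt to rule it out.
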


\begin{proof}
   (i): Without loss of generality we may assume that $N$
   is not a phylogenetic tree. Then since $N$ is stable, $N$ must be
   compressed by \cite[Theorem 1]{HMSW16}.
   By the second part of Lemma~\ref{lmtc},
   it follows that $N$ is tree-child if and only if for all
   tree-vertices $v\in \V(N)$ there exists a leaf $x_v\in L(N)$ such that
  $v$ is a vertex-stable ancestor of $x_v$. In combination 
  with Proposition~\ref{prstan}, this implies
  that $N$ is tree-child if and only if for all $X$-sets $C$
  of $\M$ and all tree vertices $v\in\V(N)$, we either have
  that $v$ is an ancestor of $\kappa_{N}^{-1}(\overline{r_C})$
  in $N$ or that $\kappa_{N}(v)$ belongs to
  $A_C:=\overline{\xi_C^+}(V(M_C^+))$.

  Assume first that $N$ is tree-child. Let 
  $C\subseteq L(\M)$ denote an $X$-set of $\M$. In view
  of Lemma~\ref{lmim} it suffices to show that
   $V(\M)^C/\sim$ is contained in $A_C$.
  Let $b\in V(\M)^C/\sim$. 
  Then there exists some $a\in V(\M)^C\subseteq V(\M)$ such 
  that $p_{\M}(a)=\overline{a}=b$.
  Since $\Psi_{N}^{-1}(a)\in \pi^-(N)$ it follows
  that $a':=end\circ \Psi_{N}^{-1}(a)$ is a tree vertex of $N$.
  
  If $r_C\not=a$ then the definition of $V(\M)^C$ 
  implies that $\kappa^{-1}_{N}(\overline{r_C})
  =end\circ \Psi_{N}^{-1}(r_C)$ is not below $a'$. Since $end\circ \Psi_{N}^{-1}$
  is ancestor preserving, it follows that  $a'$ is not an
  ancestor of $\kappa^{-1}_{N}(\overline{r_C})$.
  Combined with our observations
  at the beginning of this proof, it follows that 
  $\kappa_{N}(a')\in A_C$ in case $a'$ is not a leaf of $N$.
 Hence, there exists some  $d\in V(M_C^+)$ such that
  $b=\overline{a}=\kappa_{N}\circ end \circ
  \Psi_{N}^{-1}(a)=\kappa_{N}(a')=\overline{\xi_C^+}(d)$ in this case.
  If $a'$ is a leaf of $N$ then $a'$ is a leaf of $M_C^+$ and, so, 
  $\overline{\xi_C^+}(a')=\overline{a}=b$.

  If $r_C=a$ then $b=\overline{a}=\overline{r_C}=\overline{\xi_C^+}(r_C)$.
  In summary, we obtain that $V(\M)^C/\sim$ is
  contained in $A_C$ whenever $N$ is
  tree-child.

To see the converse implication it suffices to 
show that if $N$ is not tree-child
then there must exist some $X$-set $C\subseteq L(\M)$ such that 
$V(\M)^C/\sim$ is not contained in $A_C$. So assume that $N$ is
not tree-child. Then there must exists some tree vertex
$w\in \V(N)$ such that both its children $w_1$ and $w_2$ are
hybrid vertices of $N$. Without loss of generality,
we may assume that $w$ is as far away from the root $\rho_{N}$ of
$N$ as possible. Note  that  $w_1\not=w_2$ as
$N$ is a phylogenetic network and 
that  $w$ is the only parent shared by $w_1$ and $w_2$ as $N$ is stable.
For $i=1,2$, let $P_i\in\pi_{w_i}(N)$ 
denote a directed path form $\rho_{N}$ ending in $w_i$
that does not contain the arc $(w,w_i)$ (which must exist even if the
other parent of $w_i$ is below $w$). Let $C\subseteq L(\M)$ denote an 
$X$-set such that $M_C^+$  contains $P_1$ and $P_2$ in its 
vertex set. Let $P_w\in\pi_w(N)$ denote a directed path
from $\rho_{N}$ to $w$. Then $P_w\in V(\M)$ and the choice of $P_1$ and
$P_2$ implies that $f(r_C)=end\circ \Psi^{-1}_{N}(r_C)$ is 
not below 
$end\circ \Psi^{-1}_{N}(P_w)= f(P_w)$ in $N$. By definition, it follows
that $P_w$ is a vertex in $V(\M)^C$. Hence,
$\overline{P_w}$ is a vertex in $V(\M)^C/\sim$. However, by
construction, there cannot exist some $w'\in V(M_C^+)$
such that $\xi_C^+(w')=P_w$ and, therefore, that
$\overline {\xi_C^+}(w')=\overline{P_w}$. Thus, $V(\M)^C/\sim$
is not contained in $A_C$, as required.

(ii): Note first that since $N$ is stable, and thus 
compressed by \cite[Theorem 1]{HMSW16}, the (unique) child of
a hybrid vertex of 
$N$ is a tree vertex. Hence, $N$ is reticulation-visible 
if and only if for all vertices $w$ of $N$ whose parent 
is a hybrid vertex, there exists a leaf $x_w$ of $N$ such 
that $w$ is a vertex-stable ancestor of $x_w$. Assume
that $w$ is a tree vertex of $N$ whose parent $v$ is a hybrid
vertex.  

Assume first that $w$ is a vertex-stable ancestor of some
leaf $x_w$ of $N$. Let $C$ denote an $X$-set of $\M$.
Then, $(\xi_C^+\circ f)^{-1}(w)$
must be a vertex of $M_C^+$. Hence,
$\overline{\xi_C^+}\circ(\xi_C^+\circ f)^{-1}(w)=\kappa_N(w)$.
Consequently, $p_{\M}(f^{-1}(w))=\kappa_N(w)\in \overline{\xi_C^+}(V(M_C^+))$.
Let $w'\in V(M)$ denote the parent of $f^{-1}(w)$ in $\M$ such
that the directed path $P$ from $f(w')$ to $w$ in $N$ crosses $v$.
To see that $|p_{\M}(w')|< |p_{\M}(f^{-1}(w))|$, note first that
$|p_{\M}(w')|\leq |p_{\M}(f^{-1}(w))|$  must always hold. If 
$|p_{\M}(w')|= |p_{\M}(f^{-1}(w))|$ held then $v$ cannot be a vertex
on $P$ which is impossible.

Next, assume for contradiction that $w$ is not a vertex-stable
 ancestor of any leaf of $N$ but that for any $X$-set $C$ of $\M$
 and all non-root vertices $v$ of $\M$  for
which $|p_{\M}(u)| < |p_{\M}(v)|$ holds for the parent $u$ of $v$
we have  $p_{\M}(v) \in \overline{\xi_C^+}(V(M_C^+))$. Let 
$w_1$ and $w_2$ denote the
 two children of $w$. Then using the same notation as in the proof
of the converse implication in Part~(i), it follows that
the image $\overline{P_w}$ of the directed path $P_w=f^{-1}(w)$
 under the projection
$p_{\M}$ must be a vertex in $V(\M)^C/\sim$. 
Let $w'$ be a vertex in $\M$ that is a parent of $w$ below $r_C$.
Then $|p_{\M}(f^{-1}(w))|\not=|p_{\M}(w')| $ must hold since
the directed path from from $f(w')$ to $w$ must contain $v$.
Since  $|p_{\M}(w')|\leq |p_{\M}(f^{-1}(w))| $ always holds,
our assumption implies  that
$\kappa_N(w)=\overline{P_w}=p_{\M}(f^{-1}(w))$ is contained in
$\overline{\xi_C^+}(V(M_C^+))$. In view of Proposition~\ref{prstan} 
it follows that $w$ is a vertex stable ancestor of some leaf of $N$
which is impossible.


\end{proof}

As an immediate consequence of
Theorem~\ref{thtc} we obtain the following result
where we say that a phylogenetic tree
$T$ on $X$ is {\em strongly displayed}
by a phylogenetic network $N$ on $X$ if $T$ is displayed by $N$
and the root of $T$ is the root of $N$. We remark in
passing that for binary
phylogenetic networks this result was also shown as part of
\cite[Theorem 1.1]{Se16}. 

\begin{corollary}\label{cor:strongly-display}
  Let $N$ be a stable phylogenetic
  network on $X$ that is tree-child. Then, every phylogenetic
  tree on $X$ that is strongly displayed by $N$ is a 
  base tree for $N$.
\end{corollary}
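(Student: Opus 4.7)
The plan is to combine Theorem~\ref{thfm}, Theorem~\ref{thtc}(i), and Theorem~\ref{thtb} via a short argument in which the only substantive work is showing that strong displaying forces $r_C$ to coincide with the root $\rho_\M$ of $\M=\U(N)$.

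First, since $T$ is displayed by $N$, Theorem~\ref{thfm} supplies an $X$-set $C$ of $\M$ that endorses $T$ (so we identify $T$ with $M_C$) and for which the map $\overline{\xi_C^+}$ is injective. By Theorem~\ref{thtb}, it remains to verify that $\overline{\xi_C^+}$ is also surjective, i.e.\ that $\overline{\xi_C^+}(V(M_C^+))=V(\M)/\!\sim$. Because $N$ is tree-child, Theorem~\ref{thtc}(i) already tells us that $\overline{\xi_C^+}(V(M_C^+))=V(\M)^C/\!\sim$, and as noted in the discussion preceding Lemma~\ref{lmim}, $V(\M)^C=V(\M)$ holds precisely when $r_C$ is the root of $\M$. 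Hence the problem reduces to ensuring we can choose $C$ so that $r_C=\rho_\M$.

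This is exactly where the \emph{strong} displaying hypothesis enters. Because the root of $T$ coincides with $\rho_N$, the embedding of $T$ into $N$ sends the root of $T$ to $\rho_N$, so that the associated path $P_T(\mathrm{root}\ T)\in \pi^-(N)$ is the trivial path at $\rho_N$. Invoking Lemma~\ref{lmwd}, I choose $C$ so that $\Psi_N\circ P_T=\xi_C$. Then $\xi_C(\mathrm{root}\ T)=\Psi_N(\text{trivial path at }\rho_N)=\rho_\M$. On the other hand, upon identifying $T$ with $M_C$, the root of $T$ is the root $r_C$ of $M_C^+$, and by construction $\xi_C^+$ sends this root to $r_C$ viewed as a vertex of $\M$. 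Comparing the two expressions yields $r_C=\rho_\M$, hence $V(\M)^C=V(\M)$ and thus $\overline{\xi_C^+}$ is surjective; Theorem~\ref{thtb} then concludes that $T$ is a base tree for $N$.

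I expect the main (mild) obstacle to be the bookkeeping step that $P_T(\mathrm{root}\ T)$ really is the trivial path at $\rho_N$. This requires unpacking the convention that, when $T$ is strongly displayed, the subdivision $T'$ of $T$ realising $T$ as a subgraph of $N$ can be taken to have $\rho_N$ as its root, and that the path-valued map $P_T$ furnished by Lemma~\ref{lmwd} is built from that choice. Once this identification is made explicit, the argument is purely an assembly of Theorems~\ref{thfm}, \ref{thtc}(i), and \ref{thtb}.
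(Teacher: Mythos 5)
Your proposal is correct and follows essentially the same route as the paper: Theorem~\ref{thfm} for injectivity of $\overline{\xi_C^+}$, the strong-display hypothesis to force $r_C=\rho_{\M}$ (so $V(\M)^C=V(\M)$), Theorem~\ref{thtc}(i) for surjectivity onto $V(\M)/\!\sim$, and Theorem~\ref{thtb} to conclude. Your explicit use of Lemma~\ref{lmwd} with the embedding sending $\rho_T$ to $\rho_N$ simply spells out the step the paper compresses into the assertion that $\xi_C^+(\rho_T)$ is the root of $\M$, and it is sound provided you note (as you do implicitly) that the $X$-set produced this way is the same one for which injectivity is obtained in the proof of Theorem~\ref{thfm}.
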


\begin{proof}
  Let $T$ be a phylogenetic tree that 
  is strongly displayed by 
  $N$. Then $T$ is clearly displayed by $N$. Since $N$ is 
  stable, Theorem~\ref{thfm} implies  that there
  exists an $X$-set $C$ of $\M=(M,\mu):=\U(N)$
  such that $\overline{\xi_C^+}$ is injective,
  $\xi_C^+(\rho_{T})$ is the root of $\M$, and $T$ and $M_C$
  are isomorphic. 
  Hence, the sets $V(\M)^C/\sim$ and $V(\M)/\sim$ coincide.
  Since $N$ is tree-child,
  Theorem~\ref{thtc} implies that the sets
  $\overline{\xi_C^+}(V(M_C^+))$ and $V(\M)^C/\sim=V(\M)/\sim$ 
  are also equal.
  Since $\overline{\xi_C^+}$ is injective with image set 
  $V(\M)/\sim$, it follows by Theorem~\ref{thtb} that $T$ is a 
  base tree for $N$.
\end{proof}

Note that the converse of Corollary~\ref{cor:strongly-display}
is not true in general, even if the phylogenetic
network $N$ considered is stable. For 
example, all phylogenetic trees strongly displayed by the 
stable phylogenetic network $N$ depicted in 
Figure~\ref{favexpl}(i) are base trees for $N$, 
but $N$ is not tree-child. 
The reason for this is that if $N$ is a stable phylogenetic network 
on $X$ and $C$ is an $X$-set of $\M=(M,\mu):=\U(N)$ such
that $\overline{\xi_C^+}$ 
is injective but not bijective, then this does not imply that 
(although being  isomorphic with $M_C$) $T$  
is not a base tree for $N$. Indeed, 
there may exist a further $X$-set $C'$ of $\M$ such that $T$
and $M_{C'}$ are isomorphic and 
$\overline{\xi_{C'}^+}$ is bijective. By Theorem~\ref{thtb}, this implies 
that $M_{C'}$ (and therefore also $T$) is a base tree for $N$.

\section{Some remarks about reconstructing stable phylogenetic networks}

\label{sec:triplets-trinets}
In this section, we briefly turn our attention to the problem of
constructing stable $X$-networks from induced substructures.
For this, we focus on so called trinets and also certain subtrees of
the MUL-tree obtained as the un-fold of a phylogenetic network.
The reason for this is
that in \cite{HM13} it was pointed out that, in general, a binary phylogenetic
network $N$ (and therefore also an $X$-network) 
is not {\em encoded} that is, up to equivalence, 
uniquely reconstructible
from its induced set of triplets or set of displayed phylogenetic trees.
Given the strong relationship between a phylogenetic
network $N$ and its induced MUL-tree we start with subtrees
induced by $\U(N)$. 

Following \cite{HES14}, we call a MUL-tree on three leaves a 
{\em MUL-triplet}. So for example for $Y=\{1,2\}$, the
MUL-tree $(M,\mu)$ with leaf set $\{a,b,c\}$ and $lca_{M}(a,b)$
strictly below $lca_{M}(a,c)$ and $\mu(1)=\{a,c\}$ and $\mu(2)=\{b\}$
is a MUL-triplet on $Y$. Saying that a MUL-tree
$\M$ {\em displays} a MUL-triplet $\tau$
if there exists a MUL-tree $\M'$ obtainable from $\M$
by deleting vertices and arcs
(suppressing resulting degree two vertices and, if this
has rendered the root of $\M'$ a vertex of degree one, identifying
that root with its unique child) such 
 that $\M'$ and $\tau$
 are isomorphic, it is easy to check that the MUL-triplet $(M,\mu)$
 constructed in the previous example is
 displayed by the left
 MUL-tree depicted in Figure~\ref{enc}(i)  where we represent each
 leaf by its label.  
 
 Although it is
 straight-forward to check that this definition of displaying
 reduces to the one for phylogenetic trees it should be noted
 that although a phylogenetic tree is encoded by
 its set of  displayed triplets a MUL-tree is in general not
 encoded by its set of displayed MUL-triplets. An example for
 this is furnished by the 
  MUL-trees $\M$ and $\M'$ on $X=\{1,2,3,4\}$ depicted in Figure~\ref{enc} which display the same
  set of MUL-triplets (and this set has 21 elements).
  See also \cite{HLMS08} where similar observations were made
  for the set of {\em splits} (i.\,e.\,bipartitions) 
induced by the edges of an {\em unrooted} MUL-tree
and \cite{GW17} for the set of {\em clusters} (i.\,e.\,the set of leaves 
reachable from a vertex) in a MUL-tree.

\begin{figure}[h]
\begin{center}
\includegraphics[scale=0.6]{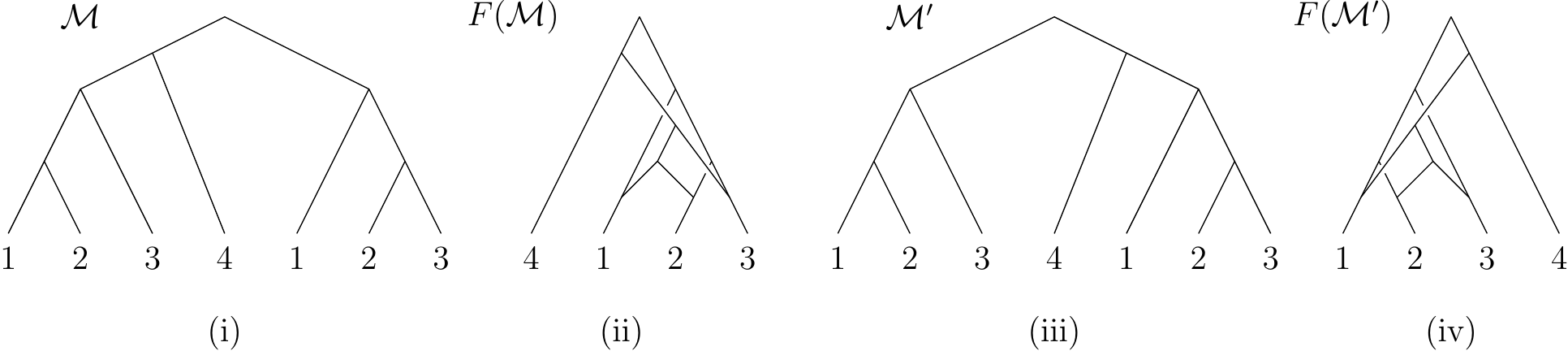}
\caption{Two MUL-trees $\M$ and $\M'$ on $X=\{1,2,3,4\}$
	 displaying the same set of MUL-triplets on $X$. Note 
	 that the MUL-trees $\M_Y$ and $\M'_Y$ are 
	 isomorphic, for all subsets $Y \subseteq X$ of size 3.
For ease of readability, we represent each leaf by its label.}
\label{enc}
\end{center}
\end{figure}


Turning our attention towards the 
two non-isomorphic phylogenetic networks
  $F(\mathcal M)$ and $F(\mathcal M')$
given in Figure~\ref{enc}(ii) and (iv), respectively, it is
easy to check that they induce the same
set of triplets on $X$.
%
%
As a direct consequence of these observations it follows  that,
in general, stable phylogenetic networks
are neither encoded by their set of  induced triplets nor by
the set of MUL-triplets induced by their un-folds.

We next turn our attention to understanding the
encoding potential of {\em trinets},
that is, $X$-networks on three leaves \cite{HM13}.
Collections of trinets displayed by a tree-child
network or so called {\em level-2 networks} are known to 
encode that network \cite{vIM13}. Since, in general, trinet systems
do not encode phylogenetic networks \cite{HIMW15}
it is interesting
to understand whether stable $X$-networks are encoded by the
trinet system they display. To help shed light into
this question, we require a certain 
\emph{leaf-removing} operation for
$X$-networks, which we now describe.

Let $N$ be an $X$-network, and let $l\in X$. Then
the leaf-removing operation applied to $l$ consists of
 repeatedly applying the following three steps until 
 a valid $(X\backslash \{l\}$)-network is obtained:
\begin{itemize}
\item[(a)] Remove the arc incident with $l$.
\item[(b)] Remove resulting hybrid vertices of outdegree
  zero (if any), and their incident arcs.
\item[(c)] Suppress resulting tree vertices of indegree and
 outdegree one (if any).
\end{itemize}

Now, let $Y$ be a subset of $X$ of size three or more, 
and let $N$ be an $Y$-network. Then 
we define the \emph{subnetwork $N_Y$ of 
	$N$ induced by $Y$} to be 
the $Y$-network obtained from $N$ by applying the 
leaf-removing operations to all leaves of $N$ 
in $X\backslash Y$. If $Y$ is of size three, we say that a 
trinet $\tau$ on $Y$ is \emph{displayed} by $N$ 
if $\tau$ and $N_Y$ 
 are isomorphic. Note again that 
 if $N$ is in fact a 
 phylogenetic tree, then this definition of displaying 
 reduces to the one for triplets.

By requiring in step (a) that all leaves of a MUL-tree
$(M,\mu)$ contained in $\mu(l)$, $l\in Y$, are removed, the
aforementioned  leaf-removing operation can also be extended to
a leaf-removing operation for MUL-trees
  $\M$ to obtain a new MUL-tree $\M_Y$. 
 In this case, we refer to $\M_Y$ as a \emph{sub-$k$MUL-tree} of $\M$, 
 where $k=|Y|$. Summarized in
  Observation~\ref{usupp}, we have that the un-fold 
  and leaf-removing operations are commutative:

\begin{observation}\label{usupp}
Let $N$ be a phylogenetic network on $X$, and $Y$ a subset 
of $X$ of size three or more. Then the MUL-trees $\U(N_Y)$ 
and $\U(N)_Y$ are isomorphic.
\end{observation}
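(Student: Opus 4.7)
The plan is to argue by induction on $|X \setminus Y|$, reducing to the case where exactly one leaf is removed. The definition of the leaf-removal operation is itself iterative in the removed leaves, so $N_Y = (N_Z)_Y$ and $\U(N)_Y = (\U(N)_Z)_Y$ whenever $Y \subseteq Z \subseteq X$, and the general claim then follows by iterating the single-leaf case. So fix $l \in X$, set $Y := X \setminus \{l\}$, and aim to construct an explicit isomorphism of MUL-trees $\Phi : \U(N_Y) \to \U(N)_Y$.

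To construct $\Phi$, partition $V(N)$ as $V(N_Y) \sqcup R \sqcup S$, where $R$ consists of $l$ together with the hybrid vertices deleted in step~(b) of the cascade and $S$ consists of the tree vertices suppressed in step~(c). The key observation is that there is a natural bijection between $\pi(N_Y)$ and the set of paths in $\pi(N)$ whose vertex set avoids $R$, obtained by contracting any visits to $S$. Composing this path-level bijection with $\Psi_N^{-1}$ on one side and $\Psi_{N_Y}$ on the other gives a candidate map between the vertex set of $U^*(N_Y)$ and the subgraph of $U^*(N)$ spanned by paths ending outside $R$; the latter is precisely what remains of $U^*(N)$ after pruning the leaves of $\U(N)$ in $\phi_N(l)$ and cascading. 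Both $\Psi_N$ and $\Psi_{N_Y}$ preserve arcs and respect the labelling maps $\phi_N$ and $\phi_{N_Y}$, so the candidate map is a graph isomorphism respecting leaf labels as far as things go prior to the suppression phase.

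The main technical obstacle is verifying that the subsequent suppression phases on the two sides match: the passage from $U^*(N_Y)$ to $\U(N_Y)$ suppresses vertices corresponding to paths ending at hybrid vertices of $N_Y$, while step~(c) of the leaf-removal operation applied to $\U(N)$ suppresses, from the already-pruned tree, exactly those vertices whose indegree and outdegree have both become one. I would argue these two sets correspond under the candidate map by showing that a path $P \in \pi(N_Y)$ ends at a hybrid vertex of $N_Y$ iff the corresponding path in $\pi(N)$ ends at a hybrid vertex of $N$ that survives the cascade (and so remains hybrid in $N_Y$, because the elements of $S$ are tree vertices), together with the dual statement on the $\U(N)_Y$ side. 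Once this matching is established, $\Phi$ descends to the desired MUL-tree isomorphism. The subtlest bookkeeping is tracking the interleaving of steps~(a), (b) and (c) during the cascade in $N$ and matching them with the single suppression pass applied to $\U(N)$; I anticipate handling this by a secondary induction on the depth of the cascade.
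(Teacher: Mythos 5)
The paper offers no proof of this statement at all: like Observation~\ref{isosub}, it is stated as an observation whose routine verification is left to the reader. So there is no authorial argument to compare against, and your plan -- reduce to the removal of a single leaf $l$, identify the paths of $N_Y$ with the paths of $N$ that survive the cascade, and then check that the two suppression phases correspond -- is exactly the natural argument, and its central claims are true (ancestors of surviving vertices survive, surviving vertices keep their tree/hybrid status in $N_Y$, and each arc of $N_Y$ corresponds to a unique directed path of $N$ through suppressed vertices).

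A few points in the sketch do need repair before it is a proof. First, the ``natural bijection between $\pi(N_Y)$ and the set of paths in $\pi(N)$ whose vertex set avoids $R$'' is not injective as stated: a path of $N$ ending at a vertex of $S$ contracts to the same element of $\pi(N_Y)$ as its truncation at the last vertex of $V(N_Y)$; the clean statement is a bijection with the paths avoiding $R$ that end in $V(N_Y)$ (or work directly with the sets $\pi^-$). Second, your partition $V(N)=V(N_Y)\sqcup R\sqcup S$ is incomplete: the cascade must also delete non-leaf \emph{tree} vertices all of whose leaves below are labelled $l$ (step (b) of the paper does not literally cover these), so take $R$ to be all vertices of $N$ with no leaf of $Y$ below them; this non-recursive description also eliminates your secondary induction on cascade depth. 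Third, for the outer induction the intermediate networks $N_Z$ are in general only $X$-networks, since the cascade can create parallel arcs, so the one-leaf lemma must be formulated for $X$-networks with paths treated as arc sequences -- your path-level argument does extend, but as written it is claimed only for phylogenetic networks. Finally, the matching of suppression phases silently needs a root convention: if all leaves of $Y$ lie below one child of $\rho_N$, the network-side operation removes the resulting outdegree-one root, whereas step (c) applied to $\U(N)$ (indegree \emph{and} outdegree one) does not; one must agree that such roots are removed on both sides for the claimed isomorphism to hold.
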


Perhaps unsurprisingly, the equivalence in
 Observation~\ref{usupp}
does not hold if the un-fold operation is replaced by the
fold-up operation. The MUL-tree $\M$ 
on $X=\{1,2,3,4\}$ depicted in
Figure~\ref{figsf}(i) furnishes an example for this. 
For $Y$=\{1,2,4\}, we have that
  $F(\M)_Y$ is the phylogenetic network 
  depicted in Figure~\ref{figsf}(ii), whereas $F(\M_Y)$ 
  is the $Y$-network depicted in
   Figure~\ref{figsf}(iii).

\begin{figure}[h]
\begin{center}
\includegraphics[scale=0.6]{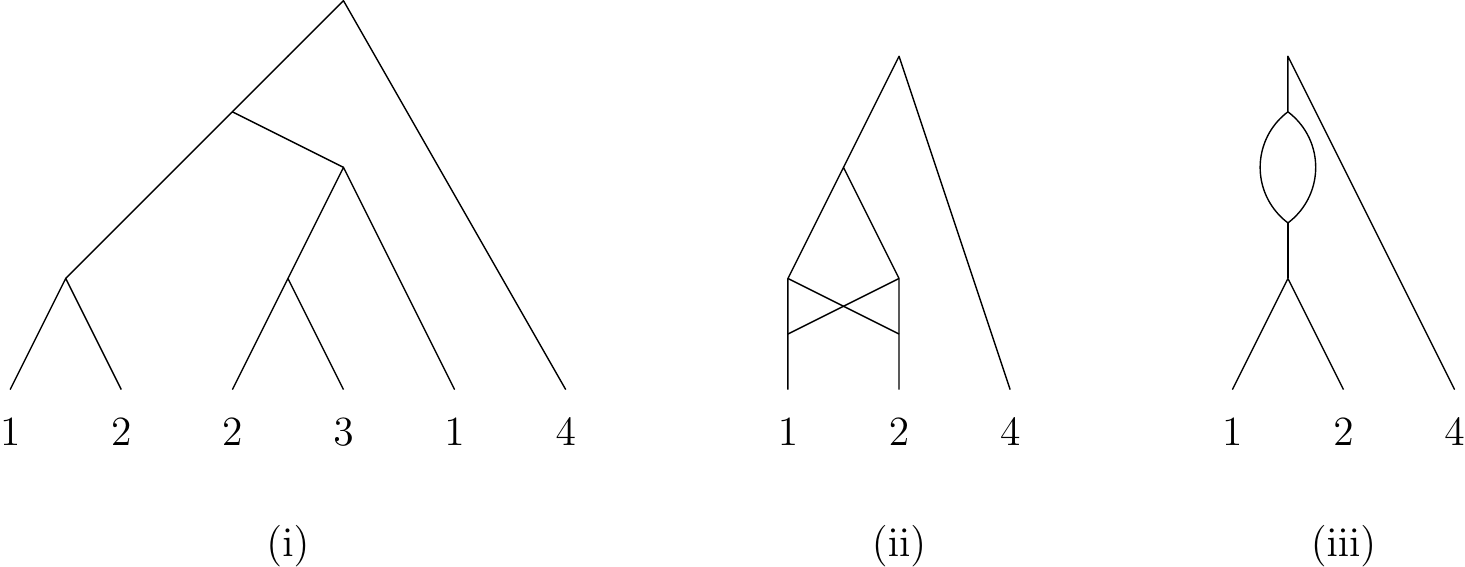}
\caption{(i) A MUL-tree $\M$ on $X=\{1,2,3,4\}$. 
	(ii) The phylogenetic network on $Y=\{1,2,4\}$ obtained 
	by first applying the fold-up operation to $\M$ and 
	then the leaf-removing 
	operation to the leaf labelled $3$. (iii) The
	 $Y$-network obtained by 
	 first applying the leaf-removing operation to the 
	 leaf labelled $3$ and then applying the fold-up operation
	 to the resulting MUL-tree.}
\label{figsf}
\end{center}
\end{figure}

Again, the MUL-trees $\M$ and $\M'$ depicted in 
Figure~\ref{enc} indicate that a MUL-tree is not, in 
general, encoded by the set of its displayed 
sub-3MUL-trees. Indeed, for all $Y \subset \{1,2,3,4\}$ 
of size three, the MUL-trees $\M_Y$ and $\M'_Y$ are 
isomorphic. Note also that the example of these two MUL-trees
implies that their respective fold-ups into the
phylogenetic networks $F(\M)$ and $F(\M')$
are not encoded by their sets of induced trinets.


\section{Discussion and open problems}
\label{sec:discussion}

Stable phylogenetic networks constitute an interesting
class of phylogenetic networks that have already
proven useful for better understanding how
polyploidy in, for example plants, has arisen. By investigating the popular
tree-based, tree-child, and reticulation-visible
properties for such networks we provide novel characterizations
that shed light into the structural
complexity of such networks. Although we only state
our main results (i.e.\,Theorems~\ref{thfm}, \ref{thtb}
and \ref{thtc}) for stable phylogenetic networks, we remark in passing that
they also apply to the more general $X$-networks
(as long as the root of the network is not the tail of two parallel arcs
in the case of Theorem~ \ref{thtb}).
Despite this, many open questions remain that might be of interest for future
study. These include the following:

\begin{enumerate}
\item In their current form, stable phylogenetic networks are defined 
via a fold-up operation for
MUL-trees. From an application point of view this
is somewhat unsatisfactory  as it
is not always clear how to construct such a tree 
\cite{HLMS08}. Thus it might
be of interest to find combinatorial characterizations of stable
phylogenetic networks
that do not rely on that operation. Given our observations
in Section~\ref{sec:triplets-trinets}, this will require
 alternative techniques as
neither triplets, trinets, or general phylogenetic trees
can be used for this. 

\item How can we quantify how close two stable phylogenetic
  networks are. Given that we obtained in Theorem~\ref{thtb}
a characterization for when a phylogenetic tree that is endorsed by a
MUL-tree $\M$ is in fact a base tree of $F(\M)$
and that tree-based networks are defined via
base trees, it might be interesting to see if the similarity
measures defined in \cite{FSS18} can be used for this.
\end{enumerate}

\section*{Acknowledgement}
The authors would like to thank the editor and the
anonymous referee for their constructive comments and suggestions
to improve the paper. GS would like to thank the University of East
Anglia for hosting him during part of this work.

\bibliographystyle{elsarticle-harv}

\bibliography{bibliography}{}

\end{document}